\newcommand{\lam}{\lambda}
\newtheorem{theorem}{Theorem}
\newtheorem{lemma}[theorem]{Lemma}
\newtheorem{prop}[theorem]{Proposition}
\newtheorem{rem}[theorem]{Remark}
\newenvironment{proof}[1][Proof]{\textbf{#1.} }
{\hfill\rule{0.5em}{0.5em}\medskip}
\newenvironment{proof*}[1][Proof]{\textbf{#1.} }{}
\def\optionalreading{$\bigstar$}
\begin{document}

\title{Jordan and Schoenflies in
non-metrical analysis situs \\}

\author{Alexandre Gabard and David Gauld\footnote{Supported by the Marsden Fund Council from Government funding, administered by the Royal Society of New Zealand.}
}

\maketitle



\newbox\quotation
\setbox\quotation\vtop{\hsize 10cm \noindent

\footnotesize {\it The Jordan curve theorem is the
mathematical formulation of a fact that shepherds have relied
on since time immemorial!}

\noindent Laurent Siebenmann, 2005 (in a letter to A.
Ranicki).

\smallskip
{\it Manifolds that are not paracompact are amusing, but they
never occur naturally. What is perhaps worse, it is difficult
to prove anything about them.}

\noindent Morris W. Hirsch \cite[p.\,32]{Hirsch_76}}

\hfill{\hbox{\copy\quotation}}


\bigskip
\bigskip
\newbox\abstract
\setbox\abstract\vtop{\hsize 12cm \noindent

\footnotesize \noindent\textsc{Abstract.} We show that both,
the Jordan curve theorem and the Schoenflies theorem extend to
non-metric manifolds (at least in the two-dimensional
context), and conclude by some dynamical applications \`a la
Poincar\'e-Bendixson.}

\centerline{\hbox{\copy\abstract}}

\bigskip
\newbox\MSC
\setbox\MSC\vtop{\hsize 12cm \noindent

\footnotesize \noindent {\it Key words.} {\rm Non-metric
manifolds,
Jordan
curve theorem, Schoenflies theorem. }}

\centerline{\hbox{\copy\MSC}}


\section{Introduction}

The intrinsic importance of {\it manifold theory},
regarded as the subdiscipline of topology climaxing the
venerable Euclidean geometry, can hardly be overemphasised in
view of
the
seminal achievements obtained over the
past two centuries (Gauss, Lobatschevskii, Riemann,
Poincar\'e, \dots, Perelman,
 \dots \ just to name a few). Especially elaborated is the metric theory where
it is postulated that a
distance function  generates the manifold topology. In
contradistinction the non-metric case (studied
by Cantor,
Hausdorff, Vietoris, Alexandroff, Pr\"ufer/Rad\'o, R.\,L.
Moore, Calabi--Rosenlicht, the Kneser family (Hellmuth and
Martin),
 M.\,E. Rudin, Zenor, Nyikos,
\dots)\footnote{Explicit references are
given in
\cite{BGG}, or
also in
\cite[especially those cited in \S 9]{Ullrich_00}.} remains,
comparatively, a
somewhat marginal branch,
whose
(in)significance is the object of recurrent cultural
controversies\footnote{Compare
Massey~\cite[p.\,47]{Massey_67}: ``{\it Such surfaces are
usually regarded as pathological, and ignored; \dots}'' Hirsch
\cite[p.\,32]{Hirsch_76}, as quoted above.
A less severe judgement is
Milnor~\cite[p.\,7]{Milnor_70}: ``{\it The main object of this
exercise is to imbue the reader with suitable respect for
non-paracompact manifolds.}'',
or Carath\'eodory \cite[p.\,707]{Caratheodory_1950}: ``{\it
[$\dots$] so da{\ss} unter Umst\"anden eine Triangulation
nicht existiert. Da{\ss} letzteres m\"oglich ist, zeigt er an
einem besonders lehrreichen Beispiel von Heinz Pr\"ufer.}''}.
Whatever the ultimate verdict should be,
it is fair to observe that many classical
notions like {\it dynamical systems} or {\it foliations} are
perfectly natural---at least well-defined---fields of
investigations even in the non-metric realm.
Some prolegomena towards a middlebrow foliation theory
developed over non-metric manifolds are to be found
in~\cite{BGG}. Concerning the allied
theory of
dynamical systems (from the viewpoint of {\it flows}, i.e.
continuous actions of the
real line~${\Bbb R}$), the
authors are
preparing
a modest paper~\cite{BGG3}
analysing which
among the basic principles of dynamics
permit an extension to non-metric mani\-folds.
(Topics include the Poincar\'e-Bendixson theory, G.\,D.
Birkhoff's minimal systems, the Whitney-Bebutov theory of
cross-sections and flow-boxes, Whitney's flows\footnote{i.e.,
a flow
parameterising the leaves of a given orientable
one-dimensional foliation.}, the construction of
transitive\footnote{Following G.\,D. Birkhoff, a flow is
{\it transitive} if it has at least one dense orbit, and
{\it minimal} if all orbits are dense.} flows \`a la
Sidorov/Anosov-Katok, Anatole Beck's technique for slowing
down flow lines.)
Such dynamical motivations
led us to
inquire about the
availability of the Jordan and Schoenflies theorems without
any metrical proviso, which is the
chief concern of the present note. (This hopefully  justifies
our somewhat old fashioned title,
winking at Veblen's 1905 paper \cite{Veblen_1905},
often
regarded as
the first rigorous
proof of the {\it Jordan curve theorem}---abbreviated as (JCT)
in the sequel.)

If we are permitted to give a
slight
refinement of Hirsch's
phraseology above, we
believe that non-metric manifold theory
takes in reality
a two-fold incarnation: there is a ``soft-side'', usually
concerning compact, or even Lindel\"of subobjects and a
``hard-side''
involving the whole manifold itself, and which typically is
{\it not safe from the invasion of set-theoretic independence
results}\footnote{Formulation borrowed from
Nyikos~\cite[p.\,513]{Nyikos_82}. The chief issue is that the
answer to an old question of Alexandroff--Wilder relative to
the existence of a non-metric {\it perfectly normal} manifold
(i.e. each closed subset is the zero-set of a continuous
real-valued function) turned out to be independent of the
usual axiomatic ZFC (Zermelo--Fraenkel--Choice).}.
%
The
Jordan and Schoenflies problems of this note
belong
to the
former class of easy problems, reducible to metrical knowledge
as we shall see.
A similar metrical
reduction occurs with flows,
since letting flow any chart\footnote{By a {\it chart} we
shall mean an open subset homeomorphic to ${\Bbb R}^n$.} $V$
one generates a Lindel\"of submanifold
$f({\Bbb R} \times V)$,
to which one may apply the
(metric) Whitney-Bebutov theory of cross-sections and
flow-boxes. This
extends the availability of the Poincar\'e-Bendixson theory,
as well as
the fact that non-singular flows
induce foliations.
The
Lindel\"ofness
of $f({\Bbb R} \times V)$ also shows that {\it non-metric
manifolds
never support minimal flows}.
Accordingly, it is sometimes
much easier to prove things about non-metric than metric
manifolds, as corroborated
by the
elusive {\it Gottschalk conjecture}
on the (in)existence of a minimal flow on a ``baby'' manifold
like ${\Bbb S}^3$. In contrast Hirsch's statement remains
perfectly
vivid when it comes to
the existence of smooth structures on 2-manifolds, where it is
still much
undecided in which category ``soft vs. hard''  this
problem will ultimately fall.
Recall that the similar
question for PL
structures was recently solved by Siebenmann~\cite[{\sc
Surface Triangulation Theorem} (STT), p.\,18--19]{Sie}.

Perhaps  another motivation
for a
non-metric version of Schoenflies, arises  in the context of
the {\it Bagpipe Theorem} of Nyikos~\cite[Theorem 5.14,
p.\,666]{Nyikos_84},
a far reaching ``generalisation'' of the classification
of compact surfaces, extended to
\hbox{$\omega$-bounded}\footnote{A topological space is said
to be {\it $\omega$-bounded} if the closure of any countable
subset is compact.} surfaces. To be
honest the latter is rather a ``structure theorem''
as the
tentacular {\it long pipes} emanating from the compact {\it
bag} (a compact bordered surface) may exhibit a bewildering
variety of
topological types.
Understanding long pipes is tantamount to describing
simply-connected $\omega$-bounded surfaces, via the canonical
bijection given by  ``filling the pipe with a disc'', whose
inverse operation is ``disc excision''.
In this context, it may be observed that Nyikos (cf.
\cite[p.\,668, \S 6]{Nyikos_84}) relies on an
ad hoc definition of
simple-connectivity  which is
a consequence of the non-metric Schoenflies theorem
(Propositions~\ref{generalised_schoenflies} and \ref{converse}
below). Hence
our results
just bridge a little gap between the conventional definition
of
simple-connectivity (in terms of the vanishing of the
fundamental group $\pi_1$) and the one
adopted by Nyikos (separation by each embedded circle, with at
least one residual component having compact closure).
Section~6 of Nyikos~\cite{Nyikos_84}
shows that even
under the stringent
assumptions $\pi_1=0$ jointly with $\omega$-boundedness (which
should be regarded as a non-metric pendant of compactness)
two-dimensional topology
permits a
menagerie of
specimens.

The most naive approach, say to the Schoenflies problem, could
be the following: given a {\it Jordan curve} $J$ (i.e. an
embedded circle)  in a non-metric simply-connected surface,
try to engulf $J$ in a chart to conclude via the classical
{\it Schoenflies theorem}---henceforth abbreviated as (ST)---
that $J$ bounds a disc. This is
somewhat hazardous
because all the given data (as well  the ambient surface as
the embedded circle) are a priori extremely large (about the
size of
an expanding universe). However a
refinement of this idea is successful:
cover the
range of a null-homotopy by a finite number of charts, the
union of which provides a metric subsurface into which $J$ is
contractible to point, hence bounds a disc (by a homotopical
version of (ST), cf. Section~\ref{Baer-Epstein} for the
details).

Beside this ``geometric approach'' there is a more
``algebraic'' one
relying on {\it singular homology}\footnote{Initiated by
Lefschetz, 1933 and taking its definitive form
with Eilenberg, 1944.}, whose
intervention is
prompted by the fact that
non-metric manifolds
are
inherently intriangulable\footnote{The
easy argument (going back at least to
Weyl~\cite[p.\,24]{Weyl_13})
is that when enumerating simplices by adjacency, one sees that
a triangulated (connected) manifold consists of at most
countably many simplices, so is $\sigma$-compact, hence metrisable.}.
We are still hesitant about deciding which of the two
approaches provides more insights, so we decided to include
both.
A
useful reference for the singular homology of manifolds is the
paper by Samelson~\cite{Sa}, of which we shall need
 the basic {\it vanishing result for the top-dimensional
homology of an open
(connected,
Hausdorff) manifold.}
The surprising
issue
is that no ``extra-terrestrial'' non-metric
``geometric topology'' is required, just easy algebra and
finistic topology (classification of compact surfaces) do the
job. [This is a
fair judgement, modulo the
fact
that already for the non-metric Jordan theorem, our proof
has a
reliance on
(ST), so
fails to be
``pure homology''.] At the end of the note we
present a converse to the non-metric
(ST) (Proposition~\ref{converse}).

Combining both
results~(Propositions~\ref{generalised_schoenflies} and
\ref{converse}) we can state our main result as:

\begin{theorem} A (Hausdorff) surface $M$ is simply-connected
if and only if
each Jordan curve in $M$ bounds a $2$-disc in $M$.
\end{theorem}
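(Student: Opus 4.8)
This equivalence is the conjunction of two implications, which I would prove separately — and which are isolated below as Proposition~\ref{generalised_schoenflies} (the non-metric Schoenflies theorem) and its converse Proposition~\ref{converse}; the guiding principle throughout is that, in dimension two, both a null-homotopy and a loop are \emph{compact} objects and so are confined to a metrisable --- indeed triangulable --- open subsurface of $M$, where classical surface theory applies. For the direction ``$M$ simply-connected $\Rightarrow$ every Jordan curve bounds a $2$-disc'', I would take a Jordan curve $J\subset M$ together with a null-homotopy $h\colon D^2\to M$ parametrising $J$ on $\partial D^2$ (available since $\pi_1(M)=1$), cover the compact set $h(D^2)$ by finitely many charts, and let $N$ be their union: $N$ is an open subsurface which, as a finite union of second-countable open sets, is second-countable, hence metrisable. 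Since $J\subset h(D^2)\subset N$, the homotopy $h$ exhibits $J$ as null-homotopic \emph{inside} $N$, and it then suffices to invoke, in the metric surface $N$, the classical homotopical Schoenflies theorem (Section~\ref{Baer-Epstein}) --- for instance by lifting $J$ to a simple closed curve in the universal cover $\widetilde N\cong S^2$ or $\R^2$, bounding it there by a disc $\widetilde D$ via the usual (JCT)/(ST), and noting that no nontrivial deck transformation can move $\widetilde D$ off itself (else Brouwer would supply a fixed point, contradicting freeness of the deck action), so that the covering projection carries $\widetilde D$ onto a $2$-disc in $N\subset M$ bounded by $J$. (An alternative route to this implication runs through singular homology and Samelson's vanishing theorem for top-dimensional homology, but it still leans on (ST).)

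For the converse I would assume that every Jordan curve in $M$ bounds a $2$-disc and deduce $\pi_1(M)=1$. If $M$ is compact it is metrisable and the conclusion is classical (the only closed surface in which each Jordan curve bounds a disc is $S^2$), so I may assume $M$ non-compact; then every compact subsurface of $M$ has non-empty boundary, hence free fundamental group. Given an arbitrary loop $\gamma$ in $M$, its compact image lies in a finite union of charts $N$, which is metrisable and therefore triangulable, and hence lies in some compact subsurface-with-boundary $S\subseteq M$. Now $\pi_1(S)$ is generated by classes of simple closed curves of $S$; these are Jordan curves of $M$, so by hypothesis each bounds a $2$-disc in $M$ and is null-homotopic there. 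Consequently $\pi_1(S)\to\pi_1(M)$ is the trivial homomorphism, whence $[\gamma]=1$ in $\pi_1(M)$; as $\gamma$ was arbitrary, $\pi_1(M)=1$.

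The one genuinely non-routine ingredient is the \emph{metric} homotopical Schoenflies theorem used in the first implication --- this is exactly where the classical (ST) enters irreducibly, so that even the non-metric Jordan theorem is not obtained by ``pure homology''. All the remaining steps --- compressing a compact datum into a metrisable subsurface, triangulating and exhausting metric surfaces, and the finite generation of surface fundamental groups by simple closed curves --- are standard finite-dimensional topology, requiring no specifically non-metric geometric input; this is the sense in which the Jordan and Schoenflies problems belong, as claimed in the introduction, to the ``soft'' side of non-metric manifold theory.
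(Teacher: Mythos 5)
Your proposal is correct, and the forward implication is essentially the paper's own ``geometric'' route: engulf the null-homotopy in a finite union of charts to get a metrisable subsurface, then invoke the metric homotopical Schoenflies theorem via the universal cover (Proposition~\ref{NMHST} together with Lemmas~\ref{simply-connected} and~\ref{Baer_thm}). One caveat there: the clause ``no nontrivial deck transformation can move $\widetilde D$ off itself'' compresses the genuinely delicate point. What must be shown is that $\widetilde D$ meets each orbit of the deck action at most once, and Brouwer only applies after one has established (via disjointness of the lifts of $J$, the Jordan curve theorem and the Jordan subdomain lemma) that a hypothetical offending deck transformation maps $\widetilde D$ \emph{into} itself; the paper's proof of Lemma~\ref{Baer_thm} spends most of its length on exactly this case analysis, so if you are not simply citing (HST) as known you would need to supply it.

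Your converse, however, takes a genuinely different route from Proposition~\ref{converse}. The paper homotopes the given loop, via simplicial approximation and general position inside a triangulated metric subsurface, to a map with discrete singular set, and then iteratively excises embedded subloops --- each bounds a disc by hypothesis, hence can be contracted --- until the loop itself becomes an embedded circle; the hypothesis is thus applied one innermost subloop at a time. You instead confine the loop to a compact bordered subsurface $S$, observe that $\pi_1(S)$ is free on classes of simple closed curves, and kill the whole homomorphism $\pi_1(S)\to\pi_1(M)$ at once. Your argument is shorter and conceptually cleaner, at the cost of importing the classification-theoretic fact that the fundamental group of a compact surface-with-boundary admits a generating set of embedded circles (and of treating the compact $M$ case separately, which you do); the paper's argument avoids that input, needing only simplicial approximation and general position for maps of a $1$-complex into a surface. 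Both are sound.
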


\smallskip
{\small {\sc
Typo- and bibliographical
conventions.} We shall
put in small fonts
certain digressions not directly
relevant to our main purpose (those optional readings, marked
by the symbol \optionalreading, can be
omitted without loss of continuity). Many classical references
related to Jordan and Schoenflies are listed in
Siebenmann~\cite{Sie}; so any
lazy
referencing, by us, of the form [Jordan, 1887] means that the
item can
be located  in Siebenmann's bibliography.
On the
other hand we try to reserve
(hopefully not too caricatural) historical comments to
footnotes in order
to keep clean the logical structure of the argument. Those
historical details are
provided as
distractions,
which
in the best cases represent only a first-order approximation
toward a sharpened picture provided by
the first-hand sources. Perhaps the
diagram below provides a snapshot view of some of the
historical background relevant to our purpose.

}

\begin{figure}[h] \centering
    \epsfig{figure=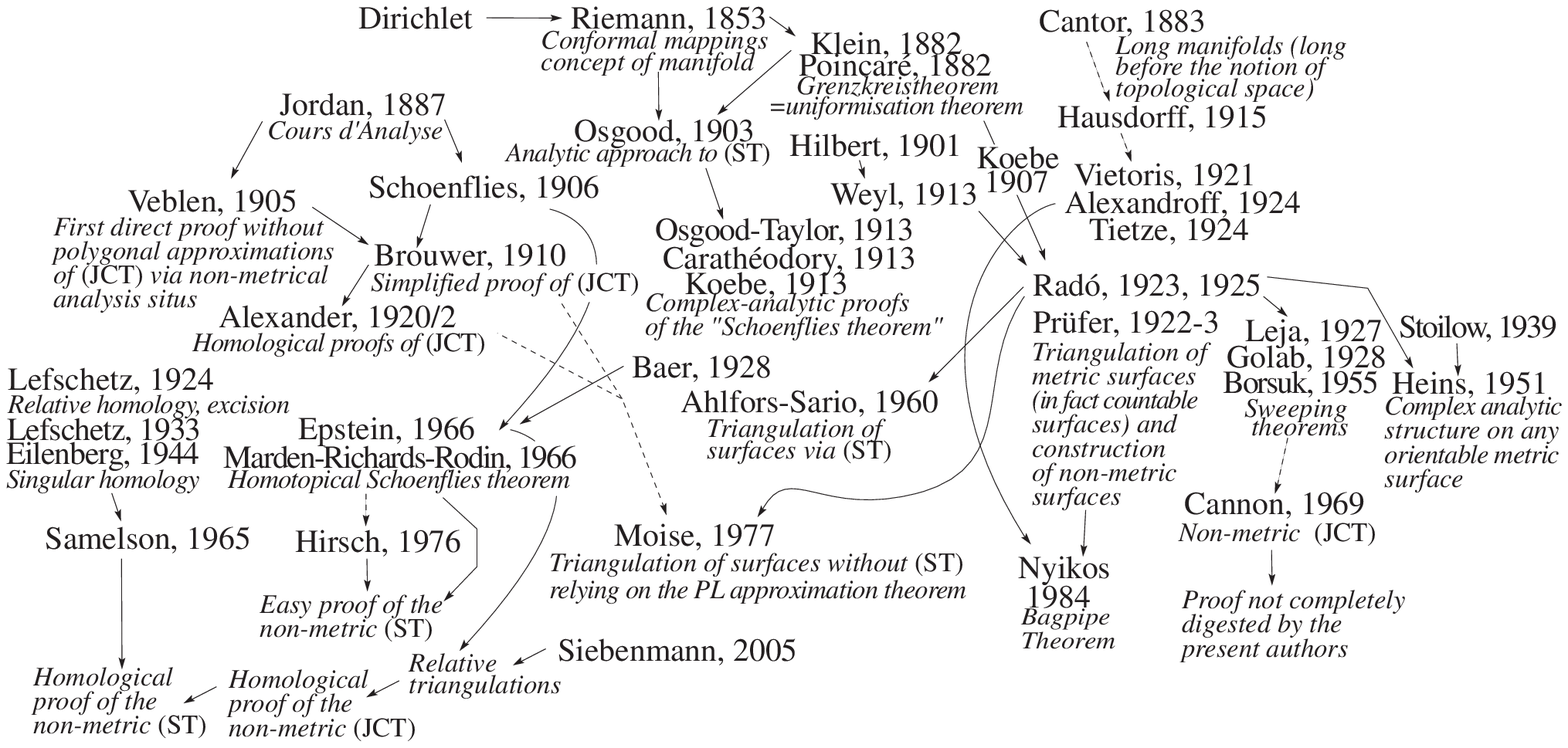,width=162mm}
\vskip-35pt\penalty0
\end{figure}




\section{The geometric
approach}\label{Baer-Epstein}

Before presenting
the homological argument, we discuss two alternative
approaches that were suggested to us by a closer look to the
existing literature.

\smallskip

(1)
We realised that the non-metric Jordan theorem is also stated
by R.\,J. Cannon~\cite[Remark on p.\,97]{Cannon_69}: ``{\it
The Jordan curve theorem, which is well known in the case of
the plane, is true for noncompact simply connected
$2$-manifolds in general.}''
Cannon's proof is rather
succinct
in its reliance on
a   ``sweeping'' theorem of Borsuk,
to which alas
no explicit reference is provided. The most appropriate
reference we were able to locate is Borsuk~\cite{Borsuk},
where
previous works of [Leja, 1927]\footnote{Leja's original
argument depends
on a parametric version of the Riemann mapping theorem due to
[Rad\'o, 1923]=\cite{Rado_23_Szeged}, and so will
doubtfully satisfy the ``topologically inclined'' reader.}
and [Go{\l}ab, 1928] are revisited.
(Go{\l}ab is apparently responsible for
the terminology {\it balayage}=sweeping.) Unfortunately we
failed to understand the details of Cannon's proof,
as in all sweeping theorems we are aware of (\cite{Borsuk} and
the two subreferences cited above), the ambient manifold
is ${\Bbb R}^n$, a
condition which seems hard to fulfil when covering by charts
the range of a null-homotopy.


\smallskip

(2) Now we come to the geometric approach to the (non-metric)
Schoenflies problem, which strangely enough permutes the
``logical r\^oles'' of Jordan and Schoenflies. It is based on the following exercise in
Hirsch's
book~\cite[p.\,207, Ex.\,2]{Hirsch_76},
stated as: ``{\it Let $M$ be a surface\footnote{Presumably
assumed paracompact, cf.  the Convention formulated in
\cite[\S 5, p.\,32--33]{Hirsch_76}.} and $C\subset M$ a
circle. If $C$ is contractible to a point in $M$ then $C$
bounds a disk in $M$}''.
Call this statement (HST) for {\it homotopical Schoenflies
theorem}. The pleasant issue is that this statement
immediately transcends itself beyond the metric realm:

\begin{prop}\label{NMHST} Let $M$ be a
(Hausdorff not necessarily metric) surface and $C$ be a
null-homotopic Jordan curve on $M$. Then $C$ bounds a $2$-disc
in $M$. (In particular if $M$ is simply-connected, each Jordan
curve bounds a disc.)
\end{prop}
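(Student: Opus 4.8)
The plan is to use the compactness of the image of a null-homotopy to shrink the ambient surface down to a metric one, and then quote (HST) verbatim.

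First I would fix a continuous map $H\colon S^1\times[0,1]\to M$ witnessing that $C$ is null-homotopic, with $H|_{S^1\times\{0\}}$ a parametrisation of $C$ and $H|_{S^1\times\{1\}}$ constant. The image $K:=H(S^1\times[0,1])$ is then compact and connected, and since $M$ is a surface it can be covered by finitely many charts $U_1,\dots,U_n$ (each homeomorphic to $\R^2$). Set $N:=U_1\cup\dots\cup U_n$, an open subset of $M$ containing $K$. As a manifold is locally connected, the connected component $N_0$ of $N$ containing $K$ is open in $N$, hence open in $M$, so $N_0$ is an open connected subsurface of $M$ with $C\subseteq K\subseteq N_0$.

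The crux is that $N_0$ is metric. Each $U_i\cong\R^2$ is second countable, and the $U_i$ are open in $N$, so amalgamating countable bases of the $U_i$ yields a countable basis for $N$, a fortiori for $N_0$. Thus $N_0$ is a second countable, Hausdorff, locally Euclidean space; being locally compact Hausdorff it is regular, so Urysohn's metrization theorem makes $N_0$ metrizable, hence a paracompact surface. Now $H$ takes its values in $N_0$, so $C$ is contractible to a point inside $N_0$, and (HST) applied to the paracompact surface $N_0$ furnishes a $2$-disc $D\subseteq N_0\subseteq M$ bounded by $C$. The parenthetical assertion is then immediate: if $\pi_1(M)=0$, every loop — in particular every Jordan curve — is null-homotopic, hence bounds a disc.

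Modulo (HST), which I would take as a black box, the argument has no genuine obstacle; the only step demanding care is verifying that the finite union of charts is a legitimately paracompact (equivalently metrizable, equivalently Lindel\"of) surface, so that the hypothesis of (HST) — stated in Hirsch for paracompact surfaces — is actually met. The essential, slightly non-formal ingredient is that second countability, unlike paracompactness for arbitrary spaces, is preserved under finite unions, which is exactly what lets Urysohn's theorem apply.
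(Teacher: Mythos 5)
Your argument is exactly the paper's: cover the image of the contracting homotopy by finitely many charts, observe the resulting union is a Lindel\"of (second countable, hence metrizable) subsurface into which $C$ is still null-homotopic, and apply (HST) there. The extra care you take with the connected component and the Urysohn step is a fleshing-out of the same proof, not a different route.
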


\begin{proof} By compactness we may cover the image of a
contracting homotopy shrinking $C$ to a point by a finite
number of charts (open sets homeomorphic to the plane ${\Bbb
R}^2$). So $C$ is contained in a certain Lindel\"of (hence
metric) subsurface $M_{\ast}$ into which $C$ is
null-homotopic\footnote{This easy argument is {\it not} new
and appears in Cannon ({\it loc. cit.}
\cite[p.\,97]{Cannon_69}).}.
By (HST) it follows that $C$ bounds a disc in $M_{\ast}$,
which can of course be regarded as embedded in $M$.
%
%
\end{proof}

Covering space theoretic
proofs of (HST) are proposed in Epstein~\cite[Theorem 1.7,
p.\,85]{Epstein_66} and in
Marden--Richards--Rodin~\cite{Marden_66}\footnote{These
authors assume orientability, which
is not required (at least for the first part of their
statement).}, following a method that goes back at least to
Baer~\cite[\S 2, (b), p.\,106--107]{Baer_1928}\footnote{Baer
assumes his surface $\frak F$ closed of genus $g>2$, but his
argument
adapts easily to the general case.}. The idea is
simply to lift the problem to the universal covering. So one
needs first knowledge\footnote{For Poincar\'e
\cite[p.\,114]{Poincare_1883}, this seems to be obvious a
priori (i.e., prior to uniformisation), as he writes: ``{\it
La surface de Riemann est [\dots] simplement connexe et ne
diff\`ere pas, au point de vue de la G\'eom\'etrie de
situation, de la surface d'un cercle, d'une calotte
sph\'erique ou d'une nappe d'un hyperbolo\"{\i}de \`a deux
nappes.}''} of simply-connected (metric) surfaces:

\begin{lemma}\label{simply-connected} A simply-connected
metric surface is homeomorphic either to the plane ${\Bbb
R}^2$ or to the sphere ${\Bbb S}^2$.
\end{lemma}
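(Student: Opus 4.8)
The plan is to split $M$ into the compact and the open case, treating the former by the classification of closed surfaces and the latter by an exhaustion argument culminating in a monotone-union-of-cells theorem. A connected metric surface is paracompact, hence (being connected and locally compact) Lindel\"of and $\sigma$-compact, so it carries an exhaustion $M=\bigcup_{n\ge1}K_n$ by compact connected subsurfaces-with-boundary with $K_n\subset\mathrm{int}\,K_{n+1}$; by a standard modification (passing to a subsequence and absorbing into $K_n$ the relatively compact components of $M\setminus\mathrm{int}\,K_n$, of which there are only finitely many) one may assume the exhaustion \emph{regular}: every component of $M\setminus\mathrm{int}\,K_n$ has non-compact closure. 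If $M$ is compact, then $M=K_n$ for large $n$ is a closed surface with $\pi_1=1$, and the classification of closed surfaces --- all of which, apart from $\mathbb{S}^2$, have $H_1\ne0$ and hence $\pi_1\ne1$ --- gives $M\cong\mathbb{S}^2$.

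Assume henceforth $M$ is non-compact; the goal is $M\cong\R^2$. First note $M$ is orientable: a non-orientable surface contains a one-sided simple closed curve, which is non-zero in $H_1(M;\mathbb{Z}/2)$ since it fails to separate its M\"obius-band neighbourhood, whereas $\pi_1(M)=1$ forces $H_1(M;\mathbb{Z})=0$ (abelianization) and hence $H_1(M;\mathbb{Z}/2)=0$. So each $K_n$ is a compact orientable bordered surface, of some genus $g_n$ with $b_n\ge1$ boundary circles. Next, $g_n=0$: were $K_n$ of positive genus it would contain a simple closed curve $\gamma\subset\mathrm{int}\,K_n$ non-separating in $K_n$; then $K_n\setminus\gamma$ is connected and every complementary component of $K_n$ is glued to it along $\partial K_n$, so $\gamma$ is non-separating in $M$ as well, whence $0\ne[\gamma]\in H_1(M)$, contradicting $H_1(M)=0$. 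Thus every $K_n$ is planar, i.e.\ a sphere with $b_n$ holes.

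The crux is the final reduction to $b_n=1$, after which each $K_n$ is a $2$-disc. Here I would apply Proposition~\ref{NMHST} in its classical metric incarnation (Hirsch's exercise (HST)): each boundary circle of $K_n$ is null-homotopic in the simply-connected $M$, hence bounds a disc in $M$, and a routine cut-and-paste along these discs --- using regularity of the exhaustion to rule out any of the discs being swallowed by a non-compact complementary piece --- lets one enlarge $K_n$ to a disc $D_n$, arranged so that $K_n\subset D_n\subset\mathrm{int}\,D_{n+1}$. Then $M=\bigcup_nD_n$ displays $M$ as a monotone union of open $2$-cells, and $M\cong\R^2$ follows from M.\ Brown's theorem that a monotone union of open $n$-cells is an open $n$-cell. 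The mildly circular appeal to (HST) can be avoided either by quoting the Ker\'ekj\'art\'o--Richards classification of non-compact surfaces, of which the statement is the simplest instance, or by recasting the genus/boundary bookkeeping as handle-cancellation for a proper Morse function on $M$; I expect this boundary-reduction step --- not anything earlier --- to be where the real work lies.
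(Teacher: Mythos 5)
Your route is genuinely different from the paper's. The paper disposes of Lemma~\ref{simply-connected} by citation: either uniformisation (Rad\'o's triangulation plus Heins plus Koebe--Poincar\'e), or the van der Waerden--Reichardt enumeration of the triangles of a triangulated simply-connected surface, producing an ascending chain of closed discs and finishing, as you do, with Morton Brown's monotone union theorem. Your exhaustion argument is more self-contained and more informative, and its first three steps (the compact case, orientability, genus zero) are sound --- modulo the fact that extracting an exhaustion by compact \emph{bordered} subsurfaces already presupposes Rad\'o's triangulation theorem or an equivalent, a dependence the paper explicitly says it does not know how to avoid, so no complaint there.

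The difficulty is exactly the step you flag as carrying the real weight, and there the appeal to (HST) is not ``mildly'' circular but strictly so within this paper: Lemma~\ref{Baer_thm} (which is (HST)) is proved by lifting to the universal cover and invoking Lemma~\ref{simply-connected}, i.e.\ the statement under proof. (Epstein's independent proof of (HST) exists, but the paper points out that it works precisely by \emph{deducing} the classification from (HST), at the cost of a PL appendix; you cannot have it both ways.) Your two escape routes are, respectively, a citation that subsumes the entire lemma (Ker\'ekj\'art\'o--Richards) and an unexecuted sketch, so as written the proof has a gap at its crux. It is repairable along your own lines without (HST): if some $K_n$ had $b_n\ge 2$ boundary circles, then under a regular exhaustion both components of $M-C_1$ (where $C_1$ is a boundary circle of $K_n$) would have non-compact closure; but $[C_1]=0$ in $H_1(M;{\Bbb Z}/2)$, so $C_1$ bounds a compact $2$-chain whose mod~$2$ local degree is locally constant off $C_1$, vanishes near infinity, and changes parity across $C_1$, forcing one side of $C_1$ to lie in the (compact) support of the chain --- a contradiction. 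Hence $b_n=1$, each $K_n$ is a disc, and Brown's theorem finishes the proof. With that substitution your argument is complete and arguably preferable to the paper's.
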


\begin{proof} Of course the statement is a direct consequence
of the uniformisation theorem of Riemann surfaces
(Klein-Poincar\'e-Koebe 1882--1907); after using Rad\'o
\cite{R} to triangulate and Heins \cite{Heins_51} to introduce
a ${\Bbb C}$-analytic structure. As an alternative elementary
approach one can
appeal to a certain enumeration scheme for the triangles of an
open triangulated simply-connected 2-manifold proposed by van
der Waerden \cite{van_der_Waerden_1939} and
Reichardt~\cite{Reichardt_1941}: one can successively
aggregate triangles while never introducing a triangle
$\Delta_k$ touching the earlier aggregate $\Delta_1 \cup \dots
\cup \Delta_{k-1}$ along one edge plus its opposite vertex.
Such an enumeration
allows one to
construct a homeomorphism with the plane ${\Bbb R}^2$
by considering a suitable subsequence forming an
ascending chain of closed discs. One can also conclude with
the monotone union theorem of Morton Brown~\cite{Brown_1961}.
[In fact van der Waerden's motivation (cf. also the
enthusiastic paper by Carath\'eodory \cite{Caratheodory_1950})
was to provide the ``simplest possible''
foundations
to the uniformisation theorem within the frame of pure complex
function theory, while avoiding any potential-theoretic
intrusion\footnote{To appreciate fully this fact, we may refer
to Gray's historical survey~\cite[\S 6, p.\,78]{Gray_1994}}.]
Other polyhedral proofs
are to be found in Ahlfors-Sario~\cite[\S 44D, p.\,104]{AS} or
in Massey~\cite[p.\,200, Ex.\,5.7]{Massey_67}.
(We are not aware of an
argument
bypassing Rad\'o's triangulation theorem.)
\end{proof}

\begin{lemma} {\rm (HST)} \label{Baer_thm} (Baer, 1928, H.\,I. Levine, 1963,
Epstein, 1966, Marden et al. 1966). Let $M$ be a metric
surface and $C$ be a null-homotopic Jordan curve on $M$. Then
$C$ bounds a $2$-disc in $M$.
\end{lemma}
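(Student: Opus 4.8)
The plan is to carry out the classical covering-space reduction of Baer and Epstein: lift the problem to the universal cover, where by Lemma~\ref{simply-connected} the ambient space is a model surface on which the classical (JCT) and (ST) are available, and then push the resulting disc back down. Since $M$ is a surface it is locally contractible, so it admits a universal covering $p\colon \widetilde M \to M$; as $M$ is metrizable its fundamental group is countable, so the cover has countably many sheets and $\widetilde M$ is again a metrizable, simply-connected surface. By Lemma~\ref{simply-connected}, $\widetilde M$ is homeomorphic to $\mathbb{R}^2$ or to $\mathbb{S}^2$. Write $G$ for the deck group, which acts freely on $\widetilde M$. (There is also a Morse-theoretic route, implicit in Hirsch's book, but I would follow the covering-space argument, which is where Lemma~\ref{simply-connected} is tailored to fit.)

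First I would lift $C$. Parametrise it by an embedding $c\colon \mathbb{S}^1 \to M$. Because $[c]=1$ in $\pi_1(M)$, every lift $\widetilde c\colon \mathbb{S}^1 \to \widetilde M$ is again a closed loop, and $\widetilde c$ is injective since $p\circ\widetilde c = c$ is; as $\mathbb{S}^1$ is compact and $\widetilde M$ Hausdorff, $\widetilde c$ is an embedding. Thus $\widetilde C := \widetilde c(\mathbb{S}^1)$ is a Jordan curve in $\widetilde M$, and by the classical (JCT)$+$(ST) applied in $\widetilde M\cong\mathbb{R}^2$ (resp. $\mathbb{S}^2$) it bounds a closed $2$-disc $\widetilde D\subset\widetilde M$; in the spherical case $\widetilde C$ has two complementary discs and I will see below which to take. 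I would also record the key fact that distinct lifts of $\widetilde C$ are disjoint: if $x\in g\widetilde C\cap\widetilde C$ with $g\neq 1$, then $x=gy$ for some $y\in\widetilde C$ with $p(x)=p(y)$, whence $x=y$ by injectivity of $p$ on $\widetilde C$, so $gy=y$, contradicting freeness.

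The crux is to show $p$ is injective on $\widetilde D$; granting this, $p|_{\widetilde D}$ is a homeomorphism onto a compact set $D:=p(\widetilde D)$ homeomorphic to $D^2$, and (using that $p$ is a local homeomorphism, so $p(\widetilde D\setminus\widetilde C)$ is open and disjoint from $C$) the frontier of $D$ in $M$ is $p(\widetilde C)=C$, so $C$ bounds $D$ and we are done. Suppose, then, that $gy=y'$ for some $y\neq y'$ in $\widetilde D$ and some $g\in G\setminus\{1\}$, so that $g\widetilde D\cap\widetilde D\neq\varnothing$. Since $g\widetilde C$ and $\widetilde C$ are disjoint Jordan curves and $g\widetilde D$ is a disc bounded by $g\widetilde C$ meeting $\widetilde D$, an exercise in planar topology gives that either $g\widetilde D\subseteq\widetilde D$ or $\widetilde D\subseteq g\widetilde D$; replacing $g$ by $g^{-1}$ if necessary, assume the former. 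Then $g$ restricts to a continuous self-map of $\widetilde D$, which is homeomorphic to the closed disc, so by the Brouwer fixed point theorem $g$ fixes a point of $\widetilde D$ --- contradicting that $G$ acts freely. (When $\widetilde M\cong\mathbb{S}^2$ one has $M\cong\mathbb{S}^2$, where $G=1$ and there is nothing to prove, or $M\cong\mathbb{RP}^2$ with $G=\mathbb{Z}/2$; for the nontrivial $g$, disjointness forces $g\widetilde C$ into the interior of one of the two discs bounded by $\widetilde C$, and an elementary inspection --- here just using $g^2=1$ --- shows the \emph{other} disc is disjoint from its $g$-translate, hence projects injectively.)

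So the whole argument rests only on Lemma~\ref{simply-connected}, the classical planar and spherical Jordan and Schoenflies theorems, the Brouwer fixed point theorem, and freeness of deck transformations --- no specifically ``metric'' geometric-topology machinery. The step I expect to fight with is precisely that last crux: the dichotomy ``$g\widetilde D\subseteq\widetilde D$ or $\widetilde D\subseteq g\widetilde D$'', and the verification that $D=p(\widetilde D)$ really is an embedded $2$-disc whose frontier is exactly $C$. Both are routine plane topology once the disjointness of distinct lifts is in hand, but they are the points where a careful write-up must not be cavalier.
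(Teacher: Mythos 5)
Your proposal is correct and follows essentially the same route as the paper: lift to the universal cover, invoke Lemma~\ref{simply-connected} and the classical (ST) to get a disc $\widetilde D$ upstairs, establish disjointness of distinct lifts of the curve, and rule out identifications on $\widetilde D$ via the nesting-plus-Brouwer argument (which the paper itself records as the ``alternative to Baer's'' discreteness argument). The one genuine difference is organizational, and in your favour: by observing that \emph{any} identification of two points of $\widetilde D$ already forces $g\widetilde D\cap\widetilde D\neq\varnothing$ for some $g\neq 1$, you handle the paper's three cases uniformly and entirely bypass its most delicate step, namely Case (3) (two interior points identified), which the paper reduces to Case (2) by a rather involved arc-lifting excursion; your version needs only the Jordan subdomain lemma and the Brouwer fixed-point theorem against freeness of the deck action.
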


\begin{proof}
We
follow Baer
\cite[\S 2, (b), p.\,106--107]{Baer_1928}.
Lift the Jordan curve $C$ to the
universal covering $\pi\colon\widetilde{M}\to M$ to obtain
another Jordan curve
$\Gamma$
(in view of the
null-homotopic assumption). By the topological avatar of the
Poincar\'e-Volterra theorem (see Bourbaki \cite[Chap.\,I, \S
11.7, Corollary~2, p.\,116]{Bourbaki} or
\cite[p.\,197]{Guenot-Narasimhan}), the
cover $\widetilde M$ is second countable, hence
metric. Therefore $\widetilde{M}$ is either ${\Bbb R}^2$ or
${\Bbb S}^2$ by
Lemma~\ref{simply-connected}\footnote{In Epstein's proof this
classification is not taken for granted, but rather deduced
from (HST). However this is done at the cost of an
Appendix
on PL technology.
}, and it follows that
$\Gamma$ bounds a disc $\Delta$ in $\widetilde{M}$ by the
classical
(ST).  It is enough to show that the disc $\Delta$
maps
bijectively to $M$ via the covering projection $\pi$, yielding
the desired disc $D=\pi(\Delta)$ bounding $C$. A priori, three
collapsing possibilities could occur:

(1) Two points of the boundary of $\Delta$ are {\it
equivalent} (under a deck-transformation); [this case is
obviously impossible]

(2) A point interior to $\Delta$ and one lying on its boundary
$\Gamma$ are equivalent;

(3) Two interior points $p_1,p_2$ of $\Delta$ are equivalent.

Case (2) implies that in the interior of $\Gamma$ there would
be  a nested sequence of curves lying over $C$, hence an
accumulation point (compactness of $\Delta$); violating the
discreteness of the fibers of the (universal) covering map.
Baer
argues that distinct lifts of $C$ are disjoint,
as otherwise
projecting down to $M$ one would get ``multiple points'' on
$C$ (contradicting $C$ being a simple curve). [This
disjunction property can also be seen by considering the
restricted covering $\pi\colon \pi^{-1}(C) \to C$, whose total
space is a disjoint union of circles, each simply covering the
base circle.]
By (JCT), in the plane a Jordan curve possesses a unique
(compact) {\it inside}, by which we
mean the interior of the Jordan curve plus the Jordan curve
itself.
By (ST) one has an alternative argument to Baer's,
as the deck-transformation $\gamma$ taking $p_1\in \Gamma$ to
$p_2\in {\rm int} (\Delta)$ would map the disc $\Delta$ into
itself,
violating the Brouwer fixed-point theorem.
In both arguments,
it must be remarked that the deck-transformation $\gamma$
carries $\Gamma$ to the curve $\gamma(\Gamma)$ which is
interior to $\Gamma$ (by disjunction plus (JCT)), hence since
$\gamma$ is a global homeomorphism of the plane it must take
the inside of $\Gamma$ to the inside of $\gamma(\Gamma)$.
As a Jordan curve $J_{0}$ contained in the inside of a Jordan
curve $J$ has an inside contained in the inside of $J$ (cf.
Siebenmann~\cite[{\sc Jordan Subdomain Lemma}, p.\,4]{Sie}),
it follows that $\gamma(\Delta) \subset \Delta$,
contradicting either Brouwer, or yielding the infinite
sequence $p_1,\gamma(p_1), \gamma^2(p_1),\dots$ in the
compactum
$\Delta\supset
\gamma(\Delta)\supset \gamma^2(\Delta)\supset \dots$,
corrupting the discreteness of the fibres, as argued by Baer).

Case (3) reduces to Case (2). Indeed choose an
arc\footnote{A {\it Jordanbogen} in
Baer~\cite[p.\,107]{Baer_1928}:
this follows either from (ST) or more elementarily by
a clopen argument that shows that in any connected manifold
(whether metric or not), one can join any two given points by
an embedded arc, cf.
\cite[Prop.\,1]{Gauld_2009}.} $A$ inside ${\rm int} (\Delta)$
joining $p_1, p_2\in {\rm int} (\Delta)$, then its projection
is a closed curve\footnote{{\it \"uber einer geschlossenen
Kurve $\frak K$}; it is
not perfectly clear if Baer assumes his curve to be simple?}
(=loop) $K$ on $M$, which is not null-homotopic (as $p_1\neq
p_2$). Considering successive lifts $A=A_1, A_2, \dots$ of
$K$, where $A_i$ starts from the end-point of its predecessor
$A_{i-1}$, shows that we will eventually leave
$\Delta$ (else
there would be
again a corruption of the discreteness of the fibre). Of
course this holds
in the 
absence of a periodic motion, which might be inferred from
the torsion-free property of the $\pi_1$ of aspherical
manifolds. 
Alternatively, one may argue that a cyclic pattern leads to a
string $J:=A_1\cup \dots \cup A_k$ of $A_i$'s which is a
closed curve. Can we arrange it to be Jordan, i.e. simple? If
not,
there would be double points when projecting down.  As yet we
have not ensured that $K$ is a simple closed curve, however a
simple trick is to travel along the arc $A$ and as soon as its
projection down to $M$ exhibits a self-intersection, we may
cut out a subarc $A_0\subset A$ projecting
to an
embedded circle; and redefine $A$ as $A_0$.  Then
a fixed point for $\gamma$, the deck-translation induced by
the loop $K$, is created
in the inside
of the
Jordan curve $J$  by the Brouwer fixed-point theorem.
This justifies the absence of ``periodicity'', so that the
end-point of $A_n$ is not in $\Delta$ (for some sufficiently
large integer $n$). Then
by
(JCT), the path $A_1\cup A_2\cup \dots \cup A_n$ will meet
$\Gamma$. Since $A$
does not meet $\Gamma$, the
intersection point $p\in A_i \cap \Gamma$ occurs on a
later arc $A_i$ ($i\ge 2$), and therefore the pair consisting
of $p$ and its deck-translation
back to $A$ satisfies the requirement of Case (2).
%
%
\end{proof}

We may
now move
towards Jordan,
perhaps first recalling
the following
fine words of Felix Klein \cite[p.\,531]{Klein_1882} in 1882,
five years before\footnote{Of course this does not discredit
Jordan's priority as Klein is always perfectly clear (if not
vindicating) the merely heuristic value of his exposition,
primarily intended to be a diffusion of Riemann's ideas.
} the official ``proof'' of [Jordan, 1887]:
``{\it [\dots] da{\ss} die jetzt betrachtete Kurve, gleich
einer solchen,
die sich in einen Punkt zusammenziehen l\"a{\ss}t,
die gegebene Fl\"ache in getrennte Gebiete zerlegt}''. This
contains
(more-or-less) the following statement:

\begin{prop} (Klein, 1882)
Let $C$ be a null-homotopic Jordan curve on a Hausdorff
surface $M$ (metric or not).
Then $C$ divides $M$ (i.e. $M-C$ is
disconnected).
\end{prop}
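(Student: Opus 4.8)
The plan is to reduce the separation assertion to the existence of a bounding disc, which is already in hand. Since $C$ is null-homotopic, Proposition~\ref{NMHST} furnishes a topological embedding $h\colon\mathbb{D}^2\hookrightarrow M$ of the closed $2$-disc with $h(\partial\mathbb{D}^2)=C$; I would put $D=h(\mathbb{D}^2)$ and $U=h(\mathrm{int}\,\mathbb{D}^2)$, and then check that $M-C$ splits as the disjoint union of the two nonempty open sets $U$ and $V:=M-D$, which is exactly a disconnection of $M-C$.

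The verification I have in mind runs in four short steps. First, $D$ is compact, being a continuous image of $\mathbb{D}^2$, hence closed in the Hausdorff space $M$, so that $V=M-D$ is open. Second, $U$ is a boundaryless $2$-manifold (homeomorphic to $\R^2$) topologically embedded in the $2$-manifold $M$, so by invariance of domain $U$ is open in $M$. Third, $U$ and $V$ are disjoint since $U\subset D$, and their union is exactly $M-C$: a point of $M$ off $C$ either lies in $D$ and then, avoiding $\partial D=C$, lies in $U$, or lies outside $D$ and hence in $V$. Fourth, $V\neq\varnothing$, for if $D=M$ then the points of $C=\partial D$ would be boundary points of $M$, contradicting that $M$ is a (boundaryless) surface; and $U\neq\varnothing$ is trivial. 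Together these give $M-C=U\sqcup V$ with both pieces nonempty and open.

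I do not anticipate a serious obstacle here: once Proposition~\ref{NMHST} is granted, the remainder is routine point-set topology, the only items deserving a word being the use of invariance of domain to promote ``$U$ open in $D$'' to ``$U$ open in $M$'', and the elementary observation that a boundaryless surface cannot coincide with a single embedded disc. Should one wish to avoid appealing to the bounding disc, an alternative would be to engulf the image of a contracting homotopy of $C$ in a finite union of charts, i.e.\ in a Lindel\"of---hence metric---subsurface $M_{\ast}\supset C$, and to separate there by the classical (JCT); but I find this less satisfactory, since a priori distinct components of $M_{\ast}-C$ could reconnect through $M-M_{\ast}$, so I would keep to the bounding-disc route. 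In short, the genuine content sits upstream, in the already-established fact that a null-homotopic Jordan curve bounds a disc.
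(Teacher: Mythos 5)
Your proof is correct and follows the same route as the paper: apply Proposition~\ref{NMHST} to obtain a bounding disc $D$, split $M-C$ into $\mathrm{int}(D)$ and its complement, and rule out $D=M$ because $M$ has no boundary. The only stylistic difference is that the paper identifies $\mathrm{int}(D)$ as a component via the Frontier Crossing Lemma, whereas you exhibit the disconnection directly via compactness and invariance of domain; the substance is identical.
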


\begin{proof} By Proposition~\ref{NMHST}, $C$
bounds a disc $D$ in $M$.
Its interior $U={\rm int} (D)$ is contained in $M-C$, and
cannot be enlarged without meeting $C$ (by
the Frontier Crossing Lemma as
stated in Siebenmann~\cite[p.\,2]{Sie}). It follows that $U$
is a connected component of $M-C$,
but not the unique one as otherwise $U=M-C$, so that when
taking closures $D=M$,
violating the
assumption that $M$ is a genuine surface (without boundary).
\end{proof}

We may observe that in this geometric approach, the historical
as well as the logical order of Jordan and Schoenflies gets
reversed.
The sequel of the
paper presents an alternative ``algebraic'', indeed
homological approach, where
the natural order is restored.

\section{Generalised (non-metric) Jordan
theorem}\label{JCT}

We now start with a general
(i.e. not metrically confined) formulation of the Jordan
curve/separation theorem\footnote{See for the classical case
[Jordan, 1887], [Veblen, 1905], etc.,
and for a panoramic view
\cite{Sie}.}:

\begin{prop}\label{generalised_Jordan} (Generalised
Jordan curve theorem) Let $M$ be a (connected) Hausdorff
simply-connected surface. Then $M$ is
\emph{dichotomic}\footnote{The
term {\it schlicht(artig)} is also employed, but in the
non-metric context it would be misleading.
}, i.e. each
embedded circle $J$ in $M$ divides the surface into exactly
two
components. Moreover the (topological) frontier
of each
component of $M-J$ is $J$.
\end{prop}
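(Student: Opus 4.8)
The plan is to reduce the non-metric statement to metric/finistic knowledge via a singular-homology argument, exactly as the excerpt has advertised (``just easy algebra and finistic topology do the job''), and to handle the frontier clause afterwards by a local collaring/engulfing argument.

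\textbf{Step 1: The curve bounds, and separates into at most two pieces.} Since $M$ is simply-connected, Proposition~\ref{NMHST} already gives that $J$ bounds a $2$-disc $D$ in $M$. Its interior $U=\mathrm{int}(D)$ is an open connected subset of $M-J$, and by the Frontier Crossing Lemma it is a connected component of $M-J$ (one cannot enlarge it without hitting $J$). So one component of $M-J$ is this open disc. It remains to show $M-J$ has exactly one further component; equivalently, that $M'=M-\overline{U}$ (the complement of the closed disc) is connected, and that it is nonempty. Nonemptiness is the easy half of the argument already run in Klein's proof: if $M-J=U$ then $\overline{U}=D$ would be all of $M$, contradicting that $M$ is a surface without boundary.

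\textbf{Step 2: Connectedness of the outside via homology.} This is the heart of the matter and where I expect the real work. The idea is to invoke Samelson's vanishing theorem: for an open connected Hausdorff $n$-manifold $N$ one has $H_n(N)=0$ (with, say, $\mathbb{Z}$ or $\mathbb{Z}/2$ coefficients). Apply this to $N=M-J$, an open surface; if it had $k$ components each contributes, but the relevant invariant is rather $H_1$ or a Mayer--Vietoris computation. Concretely, I would use a Mayer--Vietoris sequence for $M=A\cup B$ where $A$ is a slightly enlarged open disc containing $\overline{U}$ (obtained by a collar of $J$ inside $D$, which exists because $D$ is a genuine disc) and $B$ is an open neighbourhood of $M-\mathrm{int}(D)$; then $A\cap B$ is an open annulus (collar of $J$), and $A\simeq$ point. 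Since $M$ is simply-connected, $H_1(M)=0$, and feeding the annulus $H_1(A\cap B)=\mathbb{Z}$ and $H_0$'s into the sequence forces $B$ (hence $M-\overline{U}$, which is a deformation retract of $B$ minus the collar) to be connected. One must also rule out $M-\overline{U}$ being empty, already done in Step~1. The delicate point is that all of this homology is perfectly valid for non-metric $M$ — singular homology needs no metric, no triangulation, no paracompactness — which is precisely the ``surprising issue'' the introduction flagged; the only external input is Samelson's lemma and the collaring of a bicollared circle in a surface, and the latter is itself metric/finistic because a $2$-disc is metrisable.

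\textbf{Step 3: The frontier clause.} Finally I must show each component $W$ of $M-J$ has topological frontier exactly $J$. Clearly $\partial W\subset J$ since $W$ is open and $M-J$ has only the two components, so $\overline{W}\subset W\cup J$. For the reverse inclusion $J\subset\partial W$: fix $p\in J$, take a chart (an open set homeomorphic to $\mathbb{R}^2$) around $p$ in which $J$ appears as a bicollared arc (again available since a neighbourhood of $p$ in $J$ lies in a metric piece, or simply by the Schoenflies disc structure near $p$); in such a chart both local sides of $J$ are hit, and because $M-J$ has exactly two components globally, the two local sides belong to the two global components, so $p$ is a limit point of \emph{each} of them. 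Hence $\partial W=J$ for both $W$.

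\textbf{Main obstacle.} The step I expect to be genuinely delicate is Step~2: getting a clean Mayer--Vietoris decomposition of a possibly huge non-metric $M$ with a correctly-behaved (open) annular overlap, i.e.\ knowing that $J$ is bicollared in $M$ so that $M-J$ is, near $J$, a product. This reduces to the fact that the embedded circle $J$ has a neighbourhood that is metrisable (e.g.\ it sits inside the disc $D$ from Proposition~\ref{NMHST}, or one covers it by finitely many charts), whence the classical bicollaring/Schoenflies machinery applies; once that is in hand the homological bookkeeping is routine.
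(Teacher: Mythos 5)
Your argument is essentially sound, but it is a genuinely different route from the one the paper takes for Proposition~\ref{generalised_Jordan}. You reach the two-component count by first invoking Proposition~\ref{NMHST} to get a bounding disc $D$, and then running a Mayer--Vietoris computation with the decomposition $M = A\cup B$ built around $D$. The paper's proof deliberately does \emph{not} use the bounding disc: it constructs a tubular neighbourhood $T$ of $J$ (Lemma~\ref{tube}, which rests on the relative triangulation/tubular-neighbourhood technology of Lemma~\ref{Siebenmann}, not on the fact that $J$ bounds anything), and then computes with the exact sequence of the pair $(M,M-J)$ together with excision, reducing $H_1(M,M-J)$ to $H_1(\Bbb S^2,\Bbb S^2-\text{equator})$. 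The point of that choice is that the ``algebraic'' chain of Sections~\ref{JCT}--4 is meant to restore the natural order Jordan\,$\Rightarrow$\,Schoenflies (Proposition~\ref{generalised_schoenflies} is then deduced from Proposition~\ref{generalised_Jordan} via Samelson and the classification of compact surfaces). Your route, like the geometric approach of Section~\ref{Baer-Epstein}, goes Schoenflies-first --- indeed Proposition~\ref{NMHST} already \emph{contains} the non-metric Schoenflies theorem as its parenthetical second sentence --- so it collapses the very distinction the paper set up. What each buys: yours piggybacks on the covering-space machinery behind (HST) (classification of simply-connected \emph{open} metric surfaces, universal covers), whereas the paper's tube argument only needs triangulability of metric surfaces with $J$ as a subcomplex plus the homology of a sphere, so its dependency set is a little lighter.

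Two technical points worth tightening. In Step~2, ``$M-\overline{U}$, which is a deformation retract of $B$ minus the collar'' is stated backwards and imprecisely; what you want is a closed set (the complement of a slightly fattened disc) that is simultaneously a deformation retract of $B$ and of $M-\overline{U}$, which does hold once $J$ is bicollared, but should be said cleanly. In Step~3, the inference ``because $M-J$ has exactly two components globally, the two local sides belong to the two global components'' is not automatic for an abstract pair of components; it is clinched here because you already have the disc: one local side of the bicollar sits inside $U=\mathrm{int}(D)$ and the other is disjoint from $D$, hence in the second component. (The paper gets the frontier clause instead from the tube $T$, whose two sides are globally distinguished.) Neither point is a real obstruction, but as written both leave a small gap.
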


\begin{proof} {\small Note first that the Hausdorff axiom is
essential: without it take $M=B\times {\Bbb R}$ the product of
the {\it branching line} $B$ (as defined e.g. in
\cite[Figure~1]{Baillif_Gabard_08}) with the usual line, on
which it is easy to draw a non-dividing circle.

}

Our proof
relies
on the following geometric lemma\footnote{In contrast to the
homological proofs of the Jordan curve
theorem (say by Brouwer~\cite{Brouwer_10}, or more fairly
[Alexander, 1920], [Alexander, 1922]=\cite{Alexander_1922})
 where the ambient manifold is
known, either a Euclidean ${\Bbb R}^n$ or a sphere ${\Bbb
S}^n$, we need here more geometric control, furnished by the
classical
(ST), in order to construct a tube around
the Jordan curve. Hence (at least in our presentation) the
non-metric Jordan theorem does {\it not} boil down to pure
homology theory.}:

\begin{lemma} \label{tube} (Tubular neighbourhoods of circles) Let $J$ be a
Jordan curve (=a homeomorph of the circle) in a Hausdorff
orientable surface $M$. Then there is an open set $T$ in $M$
containing $J$ with a homeomorphism of pairs $(T, J)\approx
({\Bbb S}^1\times{\Bbb R},{\Bbb S}^1\times \{ 0 \})$.
\end{lemma}

\begin{proof} Since $J$ is compact, it can be covered by
finitely many charts of $M$, so that we can reduce to the case
where the ambient manifold $M$ is Lindel\"of (hence metric).
Then classical results do the work: indeed by Rad\'o~\cite{R}
metric surfaces can be triangulated, and then the required
tubular neighbourhood might be constructed via combinatorial
methods. Finally the trivial product structure of $T$ comes
from orientability (as opposed to a twisted ${\Bbb R}$-bundle
over ${\Bbb S}^1$, i.e. a M\"obius band, which would violate
it).

In fact the metric subsurface $M_{\ast}$ engulfing the Jordan
curve $J$
(of the previous paragraph) is indeed triangulable but one
must
ensure that {\it a triangulation of
$M_{\ast}$ can be
arranged in such a way that $J$ is a subcomplex.} (In this
situation the required tube $T$
may be constructed via
{\it regular neighbourhood theory}\footnote{Initiated by
[J.\,H.\,C. Whitehead, 1939], developed by [Zeeman, 1962],
etc. cf. also [Rourke-Sanderson, 1972].}.)
A priori, the existence of such a triangulation looks
fragile, especially in view of fractal curves or the
construction by [Osgood, 1903] of a Jordan curve in the plane
${\Bbb R}^2$
of positive Lebesgue measure. However it is precisely the
content of
(ST), to
ensure
that whatever the complexity of a Jordan curve $J$ in ${\Bbb
R}^2$ might be, there is
still a global homeomorphism of the plane taking $J$ to the
unit circle ${\Bbb S}^1$ (or to a
triangle). In
particular, {\it there exists a triangulation of the plane
${\Bbb R}^2$ such that
any given Jordan curve $J$ occurs as a subcomplex.}
Alternatively, {\it one finds a tube around any $J$}, just by
pulling back a neat annulus around ${\Bbb S}^1$.
In our situation nothing ensures that $M_\ast$ is a homeomorph
of ${\Bbb R}^2$.
However
the classical
(ST) allows one to solve
the corresponding global problems (i.e. ${\Bbb R}^2$ replaced
by an arbitrary metric surface $M$):

\begin{lemma} \label{Siebenmann} {\rm (i)} {\sc Relative surface triangulation theorem.}
{\rm (Compare Siebenmann \cite[Remark (b), p.\,19]{Sie})}
Given a pair $(M,\Gamma)$ consisting of a graph $\Gamma$
(locally finite simplicial complex of dimension $1$) embedded
as a closed subset of a metric surface $M$, one can construct
a triangulation of $M$ so that $\Gamma$ occurs as a
subcomplex. {\rm [In our setting we just need the case where
$\Gamma$ is a circle, which is treated in
Epstein~\cite[Appendix]{Epstein_66}]}

{\rm (ii)} {\sc Tubular neighbourhoods of graphs.} {\rm (Cf.
again \cite[Remark (b), p.\,19]{Sie})} Same data as in {\rm
(i)}, one can (directly) construct a tubular neighbourhood of
the graph $\Gamma$. {\rm The proof uses the {\it local graph
taming theorem} (cf. Siebenmann~\cite[\S 9, p.\,16]{Sie})
conjointly with the collaring
techniques of Morton Brown.
}
\end{lemma}
%
%
%
This lemma completes the proof of Lemma~\ref{tube}.

\smallskip
{\sc  \optionalreading Variant ({\rm \`a la Edwin Evariste}
Moise):} Alternatively it is certainly possible to establish
both these results without relying on
(ST), following the techniques employed in Moise~\cite{M}, who
is able to prove the (absolute) surface triangulation theorem,
without
reference to (ST),
by founding everything on the PL approximation theorem.
\end{proof}

 {\small {\sc  \optionalreading Historical digression on the
triangulation of surfaces (Rad\'o, Pr\"ufer, 1922--1925).}
(For much sharper reports compare \cite[especially \S 6, \S
9]{RemSch_97} and \cite[End of \S 8 and \S 9]{Ullrich_00}.) On
pages 110-111
of his paper, Rad\'o~\cite{R} 
recalls that the triangulation theorem
was in
special cases treated by [Weyl, 1913]=\cite[p.\,21,
p.\,32]{Weyl_13} (the case of {\it analytische
Gebilde}=concrete Riemann surfaces
arising
via analytic continuation of a holomorphic function-germ), and
respectively [H.~Kneser, 1924]
(triangulability in
presence of a {\it Kurvenschar} (=foliation) on a compact
surface). It is interesting to
observe that Rad\'o's 1925
proof (cf. \cite[Hilfssatz 2, p.\,111--114]{R}) does not seem
to use
(ST)\footnote{Of course this is
not so much of a surprise if one recalls from Siebenmann
\cite[\S 4, {\sc Historical notes}]{Sie} that the
``Schoenflies theorem''
appellation seems to have been coined only in [Wilder,
1949].}. In Remmert-Schneider \cite[p.\,188, \S 9]{RemSch_97}
(where the early interactions between
Pr\"ufer and
Rad\'o are beautifully commented on), it is
asserted that Rad\'o's 1925
proof ``{\it benutzt den Riemannschen Abbildungssatz}'', i.e.
relies
on the Riemann mapping theorem. [Here we are not
 sure to
agree completely with Remmert--Schneider's
assertion, and we believe instead that Rad\'o gives himself
much pain to work at a purely topological level.]
As early as 1923, Rad\'o \cite[p.\,35--36]{Rado_23} presents a
proof of the triangulation theorem (at least for Riemann
surfaces
based on the {\it Grenzkreistheorem} of Klein-Poincar\'e). For
the general case he
provides only a sketch \cite[p.\,37]{Rado_23}.
Unfortunately, it seems that Rad\'o
condensed his 1923 exposition
influenced by
the erroneous suggestion
of
Pr\"ufer
that triangulations
could
exist without
any countability proviso. Retrospectively this early mistake
of Pr\"ufer
looks
astonishing in view of the long (line) manifolds of [Cantor,
1883, Hausdorff, 1915, Vietoris, 1921, Tietze\footnote{To whom
Hausdorff communicated his 1915 construction.}, 1924 and
Alexandroff, 1924]\footnote{Accurate references located in
\cite{RemSch_97}, \cite[\S 9]{Ullrich_00} where this ``long''
string
of (re)discoverers of the long ray is
carefully
documented (including the ``recent'' discovery by E. Brieskorn
and W. Purkert in the Univ. Bibl. at Bonn of an unpublished
Nachla{\ss} of F. Hausdorff, dated in 1915.) Meanwhile this
Nachla{\ss} has been published in~\cite{Hausdorff_1915}.
}, but
turned out to be
extremely
fruitful,
by leading
to
a new generation of non-metric manifolds, the so-called {\it
Pr\"ufer manifold(s)}.
(The latter was first described in print by Rad\'o~\cite{R},
but already mentioned in \cite[p.\,35, footnote~9]{Rado_23}.)
From a
strict logical viewpoint, it looks
intriguing to
question the rigour
of Rad\'o's 1925 proof
(a naive minded objection being that while scanning through
Rad\'o's
argument
one does not encounter any citation to Schoenflies nor to
Osgood, but perhaps such a use is implicit somewhere in
Rad\'o's proof).
Such a moderate criticism of Rad\'o
seems  also implicit in Remmert--Schneider's formulation
\cite[p.\,187]{RemSch_97}: ``{\it Eine heutigen Ma{\ss}t\"aben
gerecht werdende Behandlung des Triangulierungsproblems
\dots}'', where of course they refer to the proof
presented by Ahlfors-Sario \cite[Chap.\,1,
\S 8, p.\,105--110]{AS} (this is perhaps the first
place where
a reliance
on
(ST) for triangulability is made explicit)\footnote{Moreover
it seems that the Ahlfors-Sario proof benefited from some
corrections pointed out by G.~Thomas (compare page $vi$ of the
preface of the 1965 Second Printing of \cite{AS}).}. It
should, however, be emphasised that the Ahlfors-Sario proof
stays very close to the 1925 proof of Rad\'o. Other proofs
(usually restricted to the compact case) are given in
[Doyle-Moran, 1968]=\cite{Doyle-Moran} and [Thomassen, 1992].
In the latter reference there is (on page 116) a (too?)
severe criticism that the previous proofs (of triangulability)
relied on geometric intuition. In sum,
available proofs of the triangulability of metric  surfaces
(non-compact case included)
include the following list (in chronological order): [Rad\'o,
1925]=\cite{R} (with a sketchy precursor in \cite{Rado_23}),
[Ahlfors-Sario, 1960]=\cite{AS}, [Moise,
1977]=\cite[p.\,60]{M} and
[Siebenmann, 2005]=\cite{Sie}. It is to be noted that the
proof in [Moise, 1977] does {\it not}
depend on
(ST); and in \cite[p.\,62]{M} one even finds a serious ``3D''
justification: ``{\it Ordinarily, the triangulation theorem
for $2$-manifolds is deduced from the Sch\"onflies theorem.
This method may be simpler,
once the Sch\"onflies theorem is known, but it is in a way
misleading. In dimension $3$, the Sch\"onflies theorem fails,
but the triangulation theorem still holds. Thus we should
avoid creating the impression that the latter depends on the
former.}''
In the same vein, it can be observed that in our
non-metric two-dimensional
context
the ``reverse situation'' occurs:  the Schoenflies theorem
holds, but the triangulation theorem fails (dramatically).

}

\medskip

{\small \optionalreading {\sc A long standing
question of Spivak and Nyikos.} It is a
natural
problem to wonder if any surface admits a smooth
structure (cf. Spivak~\cite[page A-18]{Spivak}:
``{\it I do not know whether every $2$-manifold has a
$C^{\infty}$ structure.}'' and
Nyikos~\cite[p.\,108]{Nyikos_93}: ``{\it Are there
$2$-manifolds
and $3$-manifolds that do not admit smoothings?}''). In the
metric (two-dimensional) case the answer is
{\it positive} either by using methods of Riemann surface
theory\footnote{\label{Heins} Cf. M. Heins \cite{Heins_51},
who (improving
works of Stoilow) shows
the existence of a complex-analytic structure on any metric
orientable surface; hence via the two-fold orientation
covering trick, one gets a DIFF structure on any metric
surface (in reality one gets much more, namely a so-called
``Klein surface''
or ``dianalytic structure'', much studied by Alling-Greenleaf,
etc.).
%
%
} or by
``softer'' DIFF methods, albeit an explicit reference seems
difficult to locate (as deplored by
Remmert-Schneider~\cite[p.\,190]{RemSch_97}:
``{\it Erstaunlicherweise scheint hierf\"ur kein direkter
Beweis in der Literatur zu existieren.}''
Does Siebenmann's
existence of a PL structure
for non-metric surfaces (cf. \cite[
p.\,18--19, proof of (STT)]{Sie})
bring us closer to a positive answer to the Spivak-Nyikos
existence question? 

}

\medskip
{\bf Finishing the Proof of
Proposition~\ref{generalised_Jordan}.} Once a tube $T$ around
$J$ is available, the proof reduces to homological routines
(exact sequence of a pair plus excision). The sequence of the
pair $(M, M-J)$ reads (coefficients are taken in ${\Bbb Z}$
and subscripts are the ranks of the homology groups, whose
finiteness will be soon~evident):
$$
H_1(M)\to H_1(M,M-J)_s\to H_0(M-J)_r \to H_0(M)_1 \to
H_0(M,M-J)\,.
$$
Both groups at the extremities vanish (recall that the first
integral homology is the abelianisation of the fundamental group).
Therefore $r=s+1$ (additivity of the rank). We shall use
excision
to compute $s$ (cf. e.g.
\cite[Thm 2.11, p.\,47]{Vi}). By excising the complement of
the tube $T$ from the pair $(M, M-J)$, we
get an isomorphism $H_1(T,T-J)\approx H_1(M, M-J)$.
In turn we may interpret  $(T, T-J)$ as the result of excising
the two poles of a \hbox{2-sphere} $({\Bbb S}^2, J)$, where
$J$ is standardly embedded as the equator; yielding an
isomorphism $H_1(T,T-J)\approx H_1({\Bbb S}^2, {\Bbb S}^2-J)$.
Writing the sequence of the pair $({\Bbb S}^2, {\Bbb S}^2-J)$
as: $0=H_1({\Bbb S}^2)\to H_1({\Bbb S}^2, {\Bbb S}^2-J)_s\to
H_0( {\Bbb S}^2-J)_2\to H_0({\Bbb S}^2)_1\to H_0({\Bbb S}^2,
{\Bbb S}^2-J)=0$, we see that $s=1$. Hence $r=2$, completing
the proof that $M$ is dichotomic.

Alternatively using reduced homology, as $\tilde
H_0(M)=0=\tilde H_0(\Bbb S^2)$, we obtain isomorphisms
$$\tilde H_0(M-J)\approx H_1(M,M-J)\approx H_1(T,T-J)\approx H_1(\Bbb S^2,\Bbb S^2-J)\approx\tilde H_0(\Bbb S^2-J)\approx\Bbb Z.$$

The last clause follows easily from the existence of the tube
$T$.
\end{proof}

\section{Generalised (non-metric) Schoenflies
theorem}

\begin{prop}\label{generalised_schoenflies} (Non-metric
Schoenflies theorem) Let $M$ be a Hausdorff simply-connected
surface. Then $M$
is \emph{Schoenflies}\footnote{{\it Irreducible} would perhaps
be a more neutral terminology.}, i.e. each embedded circle $J$
in $M$ bounds a $2$-disc in~$M$.
\end{prop}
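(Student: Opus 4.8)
The plan is to combine the generalised Jordan theorem (Proposition~\ref{generalised_Jordan}) with the homological vanishing result for the top-dimensional homology of open manifolds. Given a Jordan curve $J$ in the Hausdorff simply-connected surface $M$, the proof of Proposition~\ref{generalised_Jordan} already produces a tubular neighbourhood $T\approx{\Bbb S}^1\times{\Bbb R}$ of $J$ and shows that $M-J$ has exactly two components, say $U_1$ and $U_2$, each with frontier equal to $J$. The strategy is to show that (at least) one of the sets $\overline{U_i}=U_i\cup J$ is a closed $2$-disc. I would first observe that $\overline{U_i}$ is a surface-with-boundary: near $J$ it looks like ${\Bbb S}^1\times[0,\infty)$ by the tube, and away from $J$ it is open in $M$, so $\overline{U_i}$ is a $2$-manifold with boundary $\partial\overline{U_i}=J$, a single circle.

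The key step is to pin down the topological type of this bordered surface. Doubling $\overline{U_i}$ along its boundary circle $J$ produces a boundaryless surface $V_i$; since $\overline{U_i}$ sits inside the simply-connected $M$, a van Kampen argument (using the tube $T$ as the overlap, whose intersection with each side is a half-open annulus, hence has $\pi_1={\Bbb Z}$ that dies on the $M$-side) shows that each $U_i$, and hence $\overline{U_i}$ and its double $V_i$, is simply-connected. Now I invoke the vanishing result of Samelson for open manifolds: if both $U_1$ and $U_2$ were non-compact, then capping off each $\overline{U_i}$ by a cone on $J$ (or equivalently examining $M$ directly) forces a contradiction — more cleanly, I would run the Mayer--Vietoris sequence for $M=\overline{U_1}\cup_J\overline{U_2}$ and note that $H_2(M)=0$ (as $M$ is an open, i.e. boundaryless non-compact, connected surface, by Samelson), while if both $\overline{U_i}$ were non-compact their $H_2$ vanish too and the sequence is consistent; the leverage instead comes from simple-connectivity of the double. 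A simply-connected surface-with-boundary whose boundary is a single circle must be the disc: its double is a simply-connected closed surface, necessarily ${\Bbb S}^2$ (either by the classification of compact surfaces once we know the double is compact, or by Lemma~\ref{simply-connected} applied after removing a point), and ${\Bbb S}^2$ cut along a circle which bounds on one side is the disc by the classical (ST). The remaining point is to guarantee the double is compact, equivalently that at least one $\overline{U_i}$ is compact; this follows because $M-J$ cannot have both components non-compact without $M$ failing to be simply-connected — concretely, if $\overline{U_i}$ is a non-compact simply-connected surface with one boundary circle, it is a half-plane ${\Bbb R}\times[0,\infty)$ (again by Lemma~\ref{simply-connected} after doubling and puncturing), and $J$ would then \emph{not} separate, since ${\Bbb R}^2$ minus a properly embedded line is connected — contradicting that $J$ already separated $M$ into the two pieces $U_1,U_2$ with \emph{both} having frontier $J$; so the parity works out and exactly one side is a disc.

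Alternatively, and perhaps more in the spirit of the paper's homological approach, I would bypass the classification subtleties by engulfing: since $J$ divides $M$ and the surface is simply-connected, I would show $J$ is null-homotopic in $M$ and then simply quote Proposition~\ref{NMHST}. To see $J$ is null-homotopic, pick the component $U_i$ whose closure is compact (shown above to exist); then $\overline{U_i}$ is a compact surface-with-boundary embedded in the simply-connected $M$, and the inclusion-induced map $\pi_1(J)\to\pi_1(M)=0$ is trivial, so $J$ bounds a disc by Proposition~\ref{NMHST}. This is cleanest: it reduces the whole statement to the already-proved non-metric homotopical Schoenflies theorem, modulo the separation result of the previous section.

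The main obstacle I expect is the compactness dichotomy: rigorously showing that \emph{at least one} of the two complementary domains $U_i$ has compact closure. The clause in Proposition~\ref{generalised_Jordan} that both frontiers equal $J$ is essential here and must be used carefully — without Hausdorffness or with a non-separating circle this fails. I would handle it by the argument sketched above (a non-compact simply-connected bordered surface with a single boundary circle is a closed half-plane, whose interior does not disconnect its one-point-compactification-type completion, contradicting that $J$ genuinely separates with $J$ as full frontier on both sides), but making this airtight without accidentally re-invoking the very Schoenflies statement being proved requires threading Lemma~\ref{simply-connected} and the classical planar (JCT)/(ST) in the right order. Everything else — the van Kampen computation, the doubling, and the final appeal to Proposition~\ref{NMHST} — is routine.
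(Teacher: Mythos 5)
Your main argument breaks at its central step. You claim that a van Kampen argument shows that \emph{each} of the two complementary components $U_1,U_2$ of $M-J$ is simply-connected. This is false already for $M={\Bbb R}^2$ and $J$ the unit circle: the exterior component deformation retracts onto a circle and has fundamental group ${\Bbb Z}$. The pushout $\pi_1(M)=\pi_1(U_1\cup T)\ast_{\pi_1(T)}\pi_1(U_2\cup T)$ being trivial does not force the factors to be trivial; it only forces at least one of the maps $\pi_1(T)\to\pi_1(U_i\cup T)$ to be non-injective (otherwise $\pi_1(T)={\Bbb Z}$ would embed in the trivial group). Everything downstream collapses with this claim: the assertion that both doubles are ${\Bbb S}^2$, the ``half-plane'' analysis of a putative non-compact side (a closed half-plane has boundary a line, not a circle, so the identification is incoherent), and hence your ``compactness dichotomy''. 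The genuinely non-trivial point, which your sketch never secures, is precisely that \emph{one} of the two sides has trivial $H_1$ (or $\pi_1$); compactness and then disc-ness are downstream of that.

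Your fallback --- show $J$ is null-homotopic and quote Proposition~\ref{NMHST} --- is correct but vacuously so: in a simply-connected space every loop is null-homotopic, so no choice of component and no compactness is needed, and the parenthetical sentence of Proposition~\ref{NMHST} literally \emph{is} the statement to be proved. That is the paper's ``geometric approach'' of Section~2; the point of the present proposition is to give an independent homological proof. The paper's argument runs: dispatch the metric case by Lemma~\ref{simply-connected} and classical (ST); otherwise, by Proposition~\ref{generalised_Jordan} one of the two closed complementary pieces, $W_{\rm ext}$, is non-metric, hence non-compact, hence has $H_1\neq 0$ by the compactness criterion of Lemma~\ref{compactness-criterion} (proved by doubling, Mayer--Vietoris, and Samelson's vanishing theorem for the top homology of open surfaces). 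Mayer--Vietoris for $M=W_{\rm int}\cup W_{\rm ext}$, together with $H_2(M)=0$ (Samelson again) and $H_1(M)=0$, yields ${\Bbb Z}\approx H_1(J)\approx H_1(W_{\rm int})\oplus H_1(W_{\rm ext})$, and the indecomposability of ${\Bbb Z}$ forces $H_1(W_{\rm int})=0$. A second application of the compactness criterion makes $W_{\rm int}$ compact, and the classification of compact surfaces (orientable, one boundary circle, $\chi=1$) identifies it as a disc. In short, the paper replaces the $\pi_1$ control of the pieces that you cannot obtain by $H_1$ control extracted from exactness; that is the missing idea.
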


\begin{proof} No loss of generality results
in assuming $M$ to be connected. If $M$ is metric, then
Lemma~\ref{simply-connected} implies that $M$ is either ${\Bbb
R}^2$ or ${\Bbb S}^2$, and the conclusion is given by the
classical
(ST)\footnote{Compare [Schoenflies, 1906] ``versus'' [Osgood,
1903]: a
thoroughgoing
account
is to be found in Siebenmann
\cite[\S 4, {\sc Historical notes.}]{Sie}, where the
contributions
coming from ``pure topology'' ([Schoenflies, 1906], [Tietze,
1913, 1914], [Antoine, 1921], [R.\,L. Moore, 1926], [Keldysh,
1966], \dots) are
analysed, and compared with those
coming from ``complex analytic methods'' ([Osgood, 1903],
[Carath\'eodory, 1913, 1913, 1913], [Koebe, 1913, 1913, 1915],
[Osgood-Taylor, 1913], [Study, 1913], \dots).
}.

So assume that $M$ is non-metric. By
Proposition~\ref{generalised_Jordan},
$M-J$ has two components, one of which must be non-metric. [If
both components were metric, then $M$ could be expressed as
the union of those plus $J$ so would be Lindel\"of, hence
metric.]
Pick a non-metric component of $M-J$,
and call it the {\it exterior of $J$} (denoted by $J_{\rm
ext}$). Call the other component the {\it interior of $J$}
(denote it $J_{\rm int}$). Define $W_{\rm int}$ and $W_{\rm
ext}$ by adding $J$ to $J_{\rm int}$ and $J_{\rm ext}$
respectively. It is easy to check that both these $W$'s are
surfaces-with-boundary (this is a local question which
can be handled via the metric version of
(ST), compare Lemma~\ref{Siebenmann}~(ii)).

The sequel
depends on the following
homological compactness criterion:

\begin{lemma}\label{compactness-criterion} A connected Hausdorff
surface-with-boundary $W$
such that $H_1(W)=0$ and with boundary $\partial W \approx
{\Bbb S}^1$ is compact.
\end{lemma}

\begin{proof}  Notice first that the conclusion is easy to corrupt
without Hausdorff: consider a 2-disc with infinitely many
origins (which is not quasi-compact).
%
Recall from Samelson~\cite[Lemma D]{Sa} that {\it a connected
Hausdorff noncompact $n$-manifold $M^n$ has a vanishing
top-dimensional (singular) homology, i.e. $H_n(M)=0$.} (Note
that Samelson's proof does not employ any metric assumption.)

Consider the double $2W=:M=W\cup W_{\ast}$, where $W_{\ast}$
is
a copy of $W$. By the Mayer-Vietoris sequence:
$$
\dots \to H_2(W)\oplus H_2(W_{\ast}) \to H_2(M) \to
H_1(\partial W=W\cap W_{\ast})\to H_1(W)\oplus
H_1(W_{\ast})\to \dots
$$
Since the last groups are zero by assumption, $H_2(M)$
surjects onto the nontrivial $H_1(\partial W)$, so is itself
non-zero. By the aforementioned (Samelson's Lemma D) it
follows that $M$ is compact, hence $W$ is also compact
(because $W$ is closed in $M$).
\end{proof}

Now since $W_{\rm ext}$ is non-compact (else its interior
would be metric), Lemma~\ref{compactness-criterion}
implies $H_1(W_{\rm ext})\neq 0$.
Write the Mayer-Vietoris sequence of the decomposition
$M=W_{\rm int} \cup W_{\rm ext}$, and set $U=W_{\rm int}$ and
$V=W_{\rm ext}$ to simplify notation
(one should
work with open sets obtained by slight
collared enlargements of the two $W$'s):
$$
\dots \to H_2(U\cup V) \to H_1(U\cap V) \to H_1(U) \oplus
H_1(V) \to H_1(U\cup V) \to \dots \,.
$$
By Samelson's Lemma~D, we have $H_2(U\cup V)=0$ since $U\cup
V=M$ is non-metric hence non-compact.
Moreover $H_1(U\cup V)=0$ as $M$ is assumed to be
simply-connected.
So
exactness
gives
an isomorphism $H_1(U\cap V)
\approx H_1(U) \oplus H_1(V)$. Now as $U\cap V=J \approx{\Bbb
S}^1$, the first group is ${\Bbb Z}$. Recalling that
$H_1(V)\neq 0$, it follows (from the indecomposability of
${\Bbb Z}$ as a sum of abelian groups) that $H_1(U)=0$. A
second application of
Lemma~\ref{compactness-criterion} shows that $U=W_{\rm int}$
is compact. Summarising $U$ is a connected compact
surface-with-boundary with one boundary component and
$H_1(U)=0$ (so $\chi(U)=1-0+0=1$) and which is orientable
(being embedded in the simply-connected surface $M$). The
classification
of compact surfaces tell us that $U$
must be the $2$-disc,
which completes the proof.
\end{proof}

\section{
A converse
to
the non-metric Schoenflies theorem}

The purpose of this section is to provide a converse to
Proposition~\ref{generalised_schoenflies},
i.e. to show the following.

\begin{prop} \label{converse}
Suppose that $M$ is a Hausdorff surface. If each embedded
circle $J$ in $M$ bounds a $2$-disc in $M$ then $M$ is
simply-connected.
\end{prop}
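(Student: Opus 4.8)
The plan is to prove directly that every loop in $M$ is null-homotopic, by induction on the number of transverse self-crossings of a generic representative. First I would reduce to $M$ connected (the hypothesis on embedded circles is inherited by the open components of $M$, and any loop lies in one of them) and fix a loop $c\colon\mathbb{S}^1\to M$. Its image is compact, hence covered by finitely many charts, so it is contained in an open Lindel\"of — therefore second countable, therefore metric — subsurface $M_\ast\subseteq M$. Since $M_\ast$ is a metric surface it is triangulable (Rad\'o~\cite{R}), in particular smoothable, so within $M_\ast$ one may homotope $c$ to a \emph{generic closed curve}: a piecewise-smooth immersed loop whose only singularities are finitely many transverse double points and no triple points. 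This first reduction — routine transversality in a smooth surface — is really the only step invoking more than pure combinatorics, and I expect it to be the main point to pin down carefully.

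With $c$ in generic form, I would prove by induction on $n\ge 0$ the statement: \emph{every generic closed curve in $M$ with at most $n$ double points is null-homotopic}. For $n=0$ a generic closed curve is an injective map of the compact space $\mathbb{S}^1$ into the Hausdorff space $M$, hence an embedded circle (the constant loop being covered trivially); by hypothesis it bounds a $2$-disc in $M$, so it is null-homotopic. For the inductive step, let $c$ have exactly $n\ge 1$ double points and choose one of them, $p$. Since $p$ is a double point, $c$ passes through $p$ exactly twice; cutting $c$ at the corresponding two parameter values expresses it, as a based loop at $p$, as a concatenation $c\simeq a\cdot b$ of two closed curves $a,b$ through $p$. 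Each of the remaining $n-1$ double points of $c$ is of exactly one of three types — a self-crossing of $a$, a self-crossing of $b$, or a transverse crossing of $a$ against $b$ — and these three possibilities partition the $n-1$ points, so $a$ and $b$ each carry at most $n-1$ double points and each is itself a generic closed curve. By the induction hypothesis $a$ and $b$ are null-homotopic, and since $[c]=[a][b]$ in $\pi_1(M,p)$, so is $c$.

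Applying this to the generic representative of an arbitrary loop shows $\pi_1(M,p)=0$; as $M$ is connected this gives $\pi_1(M)=0$, i.e. $M$ is simply-connected. Combined with Proposition~\ref{generalised_schoenflies}, this completes the proof of the Theorem announced in the Introduction. As noted, the delicate ingredient is the reduction to generic position; the surgery-and-bookkeeping in the induction is elementary, but one must check that splitting at a double point genuinely yields two \emph{generic} closed curves whose double-point counts (together with the number of $a$–$b$ crossings) sum to $n-1$, so that strict decrease — and hence the induction — is legitimate.
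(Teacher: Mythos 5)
Your proof is correct, and while it shares the paper's opening reduction (compact image covered by finitely many charts, hence a Lindel\"of and therefore metric subsurface, and a generic representative of the loop), the way the double points are then eliminated is genuinely different. The paper keeps a single loop throughout: after simplicial approximation and general position it partitions $[0,1]$ at the singular parameters, finds an embedded sub-loop $\mu|[t_{i-1},t_i]$, and uses the $2$-disc it bounds to homotope that sub-loop to a constant, repeating until the loop has been surgered into a single embedded circle; the disc hypothesis is thus invoked at every stage of the reduction. You instead split the loop at a double point $p$ into $a\cdot b$, observe that the remaining $n-1$ double points distribute among self-crossings of $a$, self-crossings of $b$, and $a$--$b$ crossings (so each factor has at most $n-1$ double points), and use the group law $[c]=[a][b]$ in $\pi_1(M,p)$, so that the disc hypothesis enters only at the base case of embedded circles. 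Your bookkeeping is arguably cleaner---there is no need to track how contracting one sub-loop and reparametrising affects the remaining singular set---at the price of using the algebra of $\pi_1$ and of the verification you yourself flag: the split-off factors are again generic, the only wrinkle being the transverse corner each of $a$ and $b$ acquires at the basepoint $p$ (which is not a double point of either factor, since $p$ has exactly the two preimages at which you cut), and which one either admits into the definition of generic closed curve or rounds off by a small homotopy supported in a chart around $p$. Two small polishing remarks: rather than passing from Rad\'o's triangulation to a smooth structure (a step the paper itself notes is classical but not entirely cost-free to reference), you could run your induction verbatim in the PL setting via simplicial approximation and general position, exactly as the paper does; and free null-homotopy of $a$ and $b$ suffices for your conclusion, since a freely null-homotopic loop has based class conjugate to, hence equal to, the identity.
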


\begin{proof}
It is enough to show that if $\lam\colon[0,1]\to M$ is a
non-constant loop in $M$ then $\lam$ is homotopic modulo
$\{0,1\}$ to an embedded circle. Suppose given such a loop
$\lam\colon [0,1]\to M$. As $\lam([0,1])$ is compact it may be
covered by finitely many coordinate charts, hence lies in a
metrisable surface. Like every metrisable surface, this
surface is the geometric realisation of a simplicial complex,
say $K$; see for example \cite[p.\,60]{M} or \cite{R}. We may
assume that $\lam(0)=\lam(1)$ is a vertex of $K$. By the
Simplicial Approximation Theorem, see for example
\cite[Theorem 1.6.11, p.\,31]{Ru}, $\lam$ is homotopic modulo
the base point to a simplicial approximation $\mu:[0,1]\to|K|$
to $\lam$ such that $\mu(0)=\mu(1)=\lam(1)$. Moreover, by
General Position, \cite[Theorem 1.6.10]{Ru} we may assume that
$\mu$ is in general position, so that its singular point set
is discrete. Thus there is a partition
$\{0=t_0<t_1<\dots<t_n=1\}$ of $[0,1]$ consisting solely of
the singular points of $\mu$.

For each $i=1,\dots,n$ either $\mu|[t_{i-1},t_i]$ is an
embedding or $\mu(t_{i-1})=\mu(t_i)$ and $\mu|[t_{i-1},t_i)$
is an embedding. In the latter case $\mu|[t_{i-1},t_i]$ is an
embedded circle so by hypothesis bounds a 2-disc in $M$. We
may use this 2-disc to find a homotopy fixing the end points
from $\mu$ to a loop which agrees with $\mu$ on
$[0,t_{i-1}]\cup[t_i,1]$ and is constant on $[t_{i-1},t_i]$,
then further homotope modulo the end points to a simplicial
map which agrees with $\mu$ on $[0,t_{i-2}]\cup[t_{i+1},1]$
and embeds $(t_{i-2},t_{i+1})$ onto
$\mu((t_{i-2},t_{i-1}]\cup[t_i,t_{i+1}))$ (with $t_{i-2}$
replaced by 0 if $i=1$ and $t_{i+1}$ replaced by 1 if $i=n$).
Repeating this procedure eventually we reach a loop $\nu$
which is homotopic to $\mu$, hence $\lam$, modulo the end
points and is such that $\nu([0,1])$ is an embedded circle, as
required.
\end{proof}

\section{
Dynamical applications of Jordan and Schoenflies}


Since non-metric manifolds cannot support minimal flows, it is
more reasonable to ask:
{\it which manifold admits a
transitive resp. a non-singular flow (in short a brushing)?}
The
well-known paradigms to the effect that Jordan separation
(dichotomy) obstructs transitivity, while Schoenfliesness
(more accurately non-vanishing Euler characteristic) impedes
brushability, extend beyond the metric (resp. compact) case.
Let us be more precise.

The non-metric Jordan theorem (Proposition~\ref{generalised_Jordan})
supplies
food to the following ``Bendixson type'' result:

\begin{prop} \label{Bendixson} A dichotomic surface
(i.e. divided by any embedded circle) cannot support a
transitive flow.
\end{prop}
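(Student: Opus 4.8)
The plan is to argue by contradiction: suppose $M$ is dichotomic and carries a transitive flow, i.e.\ a continuous action $\varphi\colon\R\times M\to M$ with a dense orbit $O=\varphi(\R\times\{p\})$. The strategy is to produce an embedded circle $J$ in $M$ whose removal does \emph{not} disconnect $M$, contradicting dichotomy. The natural source of such a circle is a \emph{periodic orbit}, or, failing that, a closed transversal manufactured from the flow via a flow-box argument.

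First I would dispose of the case where the dense orbit is periodic: then $O$ itself is a Jordan curve $J$, and $M-J$ must be connected, since the dense orbit is contained in $J$ and any open set meeting $M-J$ would have to meet $O\subset J$ --- impossible unless $M-J=\varnothing$, which cannot happen in a surface without boundary. Actually, more carefully: if $M-J\neq\varnothing$ it is open and dense (the orbit through any of its points is dense, but that orbit can never cross $J$, so it stays in $M-J$, forcing $\overline{M-J}=M$); so $M-J$ could still be disconnected. The cleaner route is: a transitive flow cannot have a periodic dense orbit on any surface without boundary, because the complement of a circle is open and the dense orbit, being disjoint from its own image's complement's boundary issues, would have to be dense in an open set it never enters. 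I would instead go straight to the main construction and handle periodicity as a degenerate sub-case.

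The main construction: pick a flow-box (a chart $V\approx\R^2$ in which the flow is a product, available by letting a chart flow as in the Introduction's Lindel\"of reduction together with the metric Whitney--Bebutov theory) and a short transverse arc $\tau$ inside it. By transitivity the orbit $O$ returns to $\tau$; by the standard Poincar\'e first-return argument on a surface (using orientability near $\tau$, or passing to the orientation double cover), two consecutive intersection points $x=\varphi(0,p')$ and $y=\varphi(T,p')$ with $x,y\in\tau$ can be joined: the orbit segment from $x$ to $y$ together with the subarc of $\tau$ from $y$ back to $x$ forms a loop $\gamma$; after a small perturbation near the corners one obtains an \emph{embedded} circle $J$ that is everywhere transverse to the flow (a closed transversal). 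Transversality is the crucial gained property. Now the key point: a closed transversal $J$ to a flow on a surface does \emph{not} separate when the flow is transitive. Indeed the orbit of any point of $M-J$ is dense in $M$, hence meets the transversal $J$; but an orbit crossing a transversal always crosses it in the \emph{same direction} (transversality plus connectedness of $\R$), so starting on one side of $J$ the orbit reaches the other side, showing both local sides of $J$ lie in a single component of $M-J$. Combined with the (generalised, Hausdorff) Jordan theorem's structure --- a circle in a surface has at most two complementary local sides along it --- this forces $M-J$ to be connected, contradicting dichotomy.

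The step I expect to be the main obstacle is making the closed-transversal construction rigorous without metric hypotheses on $M$: one must confine the whole first-return argument to a metrisable, indeed Lindel\"of, subsurface. This is exactly the reduction used repeatedly in this paper: the flow-saturation $f(\R\times V)$ of a single chart $V$ is Lindel\"of (as noted in the Introduction), hence metric, so the classical Whitney--Bebutov flow-box theory, the Poincar\'e return map, and the smoothing of corners all apply inside it; the resulting circle $J$ is then simply regarded as embedded in the ambient $M$. The only remaining care is the orientability issue for the "same-direction crossing" argument --- handled either by noting that a neighbourhood of the compact transversal $J$ in the metric subsurface is a trivial $\R$-bundle (as in Lemma~\ref{tube}, orientability of the tube following from the flow giving a coherent coorientation of $J$), or, if $M$ is non-orientable, by passing to the orientation double cover and observing that a transitive flow lifts to a flow with a dense orbit there, whence a non-separating transversal upstairs projects to a non-separating transversal downstairs. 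Once these reductions are in place the contradiction with Proposition~\ref{generalised_Jordan} is immediate.
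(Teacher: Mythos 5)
Your overall strategy (manufacture a closed transversal from the first return and show it cannot separate) is a legitimate classical route, genuinely different from the paper's: the paper uses the Bendixson ``bag'', taking as Jordan curve the orbit segment from $x$ to its first return $x_1$ closed up by the sub-arc $A$ of the cross-section, and derives the contradiction with density from the fact that the open ``short past'' $f(\,]-\varepsilon,0[\,\times {\rm int}\, A)$ of that sub-arc is never visited by the orbit. But your version has a genuine gap at its key step. You assert that since every crossing of the closed transversal $J$ is in the same direction, ``starting on one side of $J$ the orbit reaches the other side, showing both local sides of $J$ lie in a single component of $M-J$.'' This is a non-sequitur: the orbit segment joining a point on the negative local side to a point on the positive local side passes \emph{through} $J$ at the crossing instant, so it is not a path in $M-J$ and says nothing about the components of $M-J$. (The mere existence of an orbit running from one side to the other through $J$ is perfectly compatible with $J$ separating: think of a circle transverse to a radial flow.) The correct argument is the contrapositive and uses the coherent coorientation in an essential way: if $J$ separated $M$ into $U$ and $V$ with all crossings from $U$ to $V$, then $V$ would be forward-invariant and $U$ backward-invariant, so the dense orbit would meet $J$ at most once, and its trace on the $U$-side of a product flow-box around any point of $J$ would lie on a single plaque, hence be nowhere dense there --- contradicting density. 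That repair is available, but it is not what you wrote.

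A second, smaller gap: ``a small perturbation near the corners'' does not produce a closed transversal, because the orbit segment from $x$ to $y$ is tangent to the flow along its entire length, not merely at the corners. The standard construction pushes the whole orbit segment off itself inside a long flow-box $[0,T]\times[-\delta,\delta]$ (Whitney--Bebutov, legitimately available after the Lindel\"of reduction you correctly describe), replacing it by an arc of slope $\delta/T$ which is transverse everywhere and closes up along the cross-section. Your metrisability reduction and the orientability remarks are sound and agree with the paper's; note that the paper's bag argument sidesteps both difficulties above by never requiring its Jordan curve to be transverse to the flow.
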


\begin{proof} It is
a minor adaptation of the classical Bendixson bag argument.
Assume by contradiction that there is a point $x$ in the
surface $S$ with a dense orbit under a flow $f$. We may
draw a cross-section $\Sigma_x$ through $x$ and consider an
associated flow-box $f([-\varepsilon,\varepsilon] \times
\Sigma_x)$. Note that the Whitney--Bebutov theory classically
stated under a metric assumption
\cite[p.\,333]{Nemytskii-Stepanov}, holds
more universally, since the
orbit $f({\Bbb R}\times V)$ of a chart $V$ is
Lindel\"of\footnote{Recall the fact that the product of a
$\sigma$-compact with a Lindel\"of space is Lindel\"of.}. The
point $x$ must eventually return to $\Sigma_x$, and we call
$x_1$ its first return to $\Sigma_x$. The piece of trajectory
from $x$ to $x_1$ closed up by
the arc $A$ of $\Sigma_x$ joining $x$ to $x_1$ defines a
Jordan curve $J$ on $S$. It is easy to check that the
component of $S-J$ containing the near future of $x_1$ (e.g.
$f(\varepsilon / 2, x_1)$) contains in fact the full future of
$x_1$. Conclude by noticing that the ``short past'' of the arc
$A$ namely the set $f(]-\varepsilon,0[ \times {\rm int} A)$ is
an open subrectangle
which cannot intersect the orbit of $x$.
\end{proof}

In view of Proposition~\ref{generalised_Jordan} any simply-connected
surface is dichotomic, hence intransitive. Examples include
the (original) Pr\"ufer surface described in Rad\'o~\cite{R},
the Moore surface, the Maungakiekie surface (which is a plane
out of which emanates a long ray). A non simply-connected
example is the doubled Pr\"ufer surface $2P$ (of
Calabi-Rosenlicht, cf. e.g. \cite[Example~4.4]{BGG}), which is
clearly dichotomic, hence intransitive.

Schoenflies also has an obvious dynamical implication in
relation with its immediate successor Brouwer. Indeed on a
Schoenflies surface as soon as a flow line closes up into a
periodic orbit, a fixed point
is created somewhere (Brouwer's fixed-point theorem applied to
the bounding disc). Of course
Schoenfliesness alone is not enough to ensure the presence of
a periodic orbit (consider the plane ${\Bbb R}^2$ or the
semi-long plane ${\Bbb R} \times {\Bbb L}$ ``brushed'' along
the first factor). However the same condition of {\it
$\omega$-boundedness} as the one occurring in Nyikos' Bagpipe
theorem,
ensures that one
will find in the compact closure of an orbit
a minimal set (Zorn's lemma argument), which must be either a
point or a periodic orbit (by the Poincar\'e-Bendixson
argument). So picturesquely
the motion spirals towards a {\it cycle limite}.
Hence
we get:

\begin{prop} On an $\omega$-bounded,
Schoenflies (equivalently simply-connected) surface any flow exhibits a fixed point.
\end{prop}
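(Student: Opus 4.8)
The plan is a reductio ad absurdum: assume the flow $f$ on $M$ has no fixed point and produce a contradiction. The first step is to manufacture a compact minimal set. For any $x\in M$ the orbit $O(x)=f(\mathbb{R}\times\{x\})$ is the continuous image of $\mathbb{R}$, hence separable; picking a countable dense $D\subset O(x)$ one gets $\overline{O(x)}=\overline{D}$, and this is compact because $M$ is $\omega$-bounded. (If $O(x)$ is already a single point it is a fixed point, contradiction; if it is a periodic orbit we may jump straight to the last step.) Now $\overline{O(x)}$ is a nonempty compact invariant set, so a routine Zorn's lemma argument---the nonempty closed invariant subsets, ordered by reverse inclusion, admit maximal elements since the intersection of a chain is nonempty by compactness---yields a compact minimal set $K\subseteq\overline{O(x)}$.

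The second step is the Poincar\'e--Bendixson dichotomy: a compact minimal set $K$ of a fixed-point-free flow on a surface must be a periodic orbit. Here I would invoke that, being compact, $K$ together with the finitely many flow-boxes and local transversals used in the argument (flow-boxes being available because $f$ is non-singular) lies inside a finite union of charts, i.e. a metric open subsurface; so the only non-metric ingredient actually required is that the Jordan curve obtained by closing up an orbit arc with a subarc of a transversal separates the ambient surface, and this is precisely Proposition~\ref{generalised_Jordan}. With that substitution the classical monotonicity argument for successive returns to a transversal applies unchanged and forces $K$ to be a single orbit that closes up, i.e. a periodic orbit $J$ (the alternative, that $K$ is a point, is excluded by the no-fixed-point hypothesis). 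One could equally cite the non-metric Poincar\'e--Bendixson theory announced in \cite{BGG3}.

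The last step closes the loop through Schoenflies. The periodic orbit $J$ is a Jordan curve in the simply-connected surface $M$, hence null-homotopic, hence bounds a $2$-disc $\Delta$ in $M$ by Proposition~\ref{generalised_schoenflies} (indeed already by Proposition~\ref{NMHST}). Since $\partial\Delta=J$ is a single orbit it is invariant, and since each time-$t$ map $f(t,\cdot)$ is a homeomorphism of $M$, a continuity-in-$t$ argument---the path $t\mapsto f(t,w)$ cannot meet $J$ when $w\in\operatorname{int}\Delta$, for then $w=f(-t,f(t,w))\in J$---shows that $\operatorname{int}\Delta$, being a component of $M-J$, is flow-invariant, so $f$ restricts to a flow on the compact disc $\Delta$. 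Applying Brouwer's fixed-point theorem to the maps $f(1/n,\cdot)\colon\Delta\to\Delta$ produces periodic points whose periods tend to $0$; a compactness and uniform-continuity argument on $[0,1]\times\Delta$ upgrades a convergent subsequence of these to a genuine fixed point of the flow inside $\Delta$, contradicting the hypothesis.

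The main obstacle is the middle step: transplanting the Poincar\'e--Bendixson argument, classically set in $\mathbb{R}^2$ or $\mathbb{S}^2$, into the a priori enormous surface $M$. The resolution is that minimality confines all the dynamically relevant behaviour to the compact---hence metrisable---set $K$, so that the one genuinely global, non-metric fact needed, namely separation of $M$ by a Jordan curve, is already available from Proposition~\ref{generalised_Jordan}; everything else is the standard finite-dimensional toolkit (transversals, Jordan arcs, Brouwer).
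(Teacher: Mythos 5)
Your proposal is correct and follows essentially the same route as the paper: compactness of the orbit closure from $\omega$-boundedness, a Zorn's lemma minimal set, the Poincar\'e--Bendixson dichotomy (point or periodic orbit, with the non-metric separation supplied by Proposition~\ref{generalised_Jordan}), and finally Schoenflies plus Brouwer on the bounding disc. You merely flesh out details the paper leaves implicit --- notably the upgrade from fixed points of the time-$1/n$ maps to a genuine rest point of the flow --- which is a welcome precision rather than a different argument.
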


This
may be regarded as a non-metric
pendant to the ``hairy ball theorem''
(the \hbox{2-sphere} cannot be foliated nor brushed). The
proposition applies for instance to the long plane ${\Bbb
L}^2$ (in which case an alternative proof may also be deduced
from the classification of foliations on ${\Bbb L}^2$ given in
\cite{BGG}). It also applies to any space obtained from a
Nyikos long pipe, \cite{Nyikos_84}, by capping off the short
end by a 2-disc, for example the {\it long glass}, i.e. the
semi-long cylinder ${\Bbb S}^1\times \mbox{(closed long ray)}$
capped off by a 2-disc.

A more
systematic study of the dynamics of  non-metric manifolds
should appear in a forthcoming paper \cite{BGG3}.

\def\contract{\vskip-8pt\penalty0}

\medskip

{
\hspace{+5mm} 
{\footnotesize
\begin{minipage}[b]{0.6\linewidth} Alexandre
Gabard

Universit\'e de Gen\`eve

Section de Math\'ematiques

2-4 rue du Li\`evre, CP 64

CH-1211 Gen\`eve 4

Switzerland

alexandregabard@hotmail.com
\end{minipage}
\hspace{-25mm}
\begin{minipage}[b]{0.6\linewidth}
David Gauld

Department of Mathematics

The University of Auckland

Private Bag 92019

Auckland

New Zealand

d.gauld@auckland.ac.nz
\end{minipage}}

}

\end{document}